\setlist[enumerate]{label=\upshape(\arabic*)}
\numberwithin{equation}{section}
\newtheorem{theorem}{Theorem}[section]
\newtheorem{theorem-definition}[theorem]{Theorem-Definition}
\newtheorem{lemma}[theorem]{Lemma}
\newtheorem{proposition}[theorem]{Proposition}
\newtheorem{corollary}[theorem]{Corollary}
\newtheorem{question}[theorem]{Question}
\newtheorem{eg}[theorem]{Example}
\newtheorem{remark}[theorem]{Remark}
\DeclareMathOperator{\Lie}{Lie}
\DeclareMathOperator{\Pet}{Pet}
\DeclareMathOperator{\rank}{rank}
\begin{document}

\title[Structure constants of Peterson Schubert calculus
]{Structure constants of Peterson Schubert calculus
}

\keywords{Peterson varieties, Peterson Schubert calculus, structure constants, Cartan matrices, mixed Eulerian numbers}

\subjclass[2020]{14N15, 14N10, 14M15, 05E14}

\author{Tao Gui}
\address{(Tao Gui) \newline \indent Beijing International Center for Mathematical Research, Peking University, No.\ 5 Yiheyuan Road, Haidian District, Beijing 100871, P.R. China}
\email{guitao18(at)mails(dot)ucas(dot)ac(dot)cn}

\author{Yuqi Jia}
\address{(Yuqi Jia) \newline \indent Qiuzhen College, Tsinghua University, Beijing, China}
\email{jiayq24(at)mails(dot)tsinghua(dot)edu(dot)cn}

\author{Xinkai Yu}
\address{(Xinkai Yu) \newline \indent University of Science and Technology of China, 96 Jinzhai Road, Baohe District, Hefei, Anhui Province 230026, P.R. China}
\email{1256981750(at)qq(dot)com}

\author{Zhexi Zhang}
\address{(Zhexi Zhang) \newline \indent Department of Mathematics, Southern University of Science and Technology, 1088 Xueyuan Boulevard, Nanshan District, Shenzhen, Guangdong Province 518055, P.R. China}
\email{zhangzx2022(at)mail(dot)sustech(dot)edu(dot)cn}

\author{Yuchen Zhu}
\address{(Yuchen Zhu) \newline \indent Center for combinatorics, Nankai University, No.\ 94 Weijin Road, Nankai District, Tianjin 300071, P.R.China}
\email{zhych(at)mail(dot)nankai(dot)edu(dot)cn}

\begin{abstract}
We give an explicit, positive, and type-uniform formula for all equivariant structure constants of the Peterson Schubert calculus in arbitrary Lie types, using only the Cartan matrix of the corresponding root system $\Phi$. This solves an open problem originally asked by Harada--Tymoczko in type A for all Lie types. As an application, we derive a type-uniform formula for the mixed $\Phi$-Eulerian numbers. 
\end{abstract}

\maketitle

\setcounter{tocdepth}{2}

\section{Introduction}
The Peterson variety is a remarkable subvariety of the flag variety, introduced by Dale Peterson \cite{peterson1997quantum} to realize the quantum cohomology rings of all (Langlands dual) partial flag varieties geometrically. It can also be used to describe the homology of the affine Grassmannian (see \cite{lam2010quantum}) and it is closely related to the wonderful compactification of a complex semisimple algebraic group \cite{Bualibanu2017Peterson}.
The Peterson varieties in type $A$ are closely related to the Toeplitz matrices \cite{rietsch2003totally}.

Now, let \( G \) be a complex semisimple linear algebraic group with a Borel subgroup \( B \subset G \). Recall that the cohomology ring $H^*(G/B; \mathbb{Z})$ of the flag variety $G/B$ has a distinguished basis consisting of the Schubert classes \cite{bernstein1973schubert}. Studying the product structure of these Schubert classes (sometimes also encompassing the study of analogous questions in generalized cohomology theories or equivariant settings) is the central problem in modern Schubert calculus. By Kleiman's transversality theorem \cite{kleiman1974transversality}, the Schubert structure constants are non-negative integers. In type $A$, the Schubert polynomials in \cite{Lascoux} are polynomial representatives of the Schubert classes in the classical Borel presentation \cite{borel1953cohomologie} of the cohomology ring of the complete flag variety. It is an outstanding problem in combinatorics and enumerative algebraic geometry to give a positive formula or a combinatorial model for the multiplicative structure constants of the Schubert polynomials.

There are also interesting works studying the structure constants of the cohomology ring of the Peterson varieties (the so-called ``Peterson Schubert calculus''). Let $\Pet_G$ denote the Peterson variety inside $G/B$ (see \eqref{eq-def of Pet} for a precise definition). It admits an action of a
one-dimensional sub-torus $S$ of the maximal torus $T\subset G$. Let $\sigma_{v_I} \in H_S^*(G / B)$ be a Schubert class indexed by some Coxeter element $v_I \in W$ for $I \subset \Delta$, where $W$ is the Weyl group and $\Delta$ is the set of simple roots. Let $\iota^*\left(\sigma_{v_I}\right)$ be the equivariant pullback of $\sigma_{v_I}$ along the inclusion $\iota: \Pet_G \rightarrow G / B$. While the class $\iota^*\left(\sigma_{v_I}\right)$ depends on the choice of $v_I$, the class $p_I:=\frac{\iota^*\left(\sigma_{v_I}\right)}{m\left(v_I\right)}$ is independent of this choice after dividing by a certain intersection multiplicity $m\left(v_I\right)$, known as the Peterson Schubert class in the literature. In \cite{Goldin2024positivity}, 
Goldin, Mihalcea, and Singh showed that $\left\{p_I\right\}_{I \subset \Delta}$ is an $H_S^*(p t)$-basis of $H_S^*(\Pet_G; \mathbb{Z})$, and that this basis is dual to the equivariant Borel--Moore homology basis constituted by the fundamental classes of the Peterson Schubert cycles (see Theorem \ref{thm-PSbasis}). 
Furthermore, their main result shows that the structure constants of multiplication $c_{I, J}^K\in H_S^*(p t)\cong\mathbb{Z}[t]$, with respect to the basis $\left\{p_I\right\}_{I \subset \Delta}$, defined by
\begin{equation} \label{eq-def of SC}
p_I \cdot p_J=\sum_K c_{I, J}^K p_K,
\end{equation}
are polynomials in $t$ with non-negative integer coefficients.

A natural open question (asked in \cite[P.43]{Harada2011positive} for Lie type $A$) is the following
\begin{question} \label{ques-intro}
    Is there a positive formula for all structure constants $c_{I, J}^K$ in \eqref{eq-def of SC}?
\end{question}
 In type $A$, Goldin--Gorbutt \cite{goldin2022positive} found positive combinatorial formulae for the structure constants by reducing the general $I,J,K$'s to the consecutive ones successively, while Abe--Horiguchi--Kuwata--Zeng \cite{abe2024geometry} found a different combinatorial model computing these structure constants in non-equivariant cohomology based on a geometric viewpoint. Some years earlier, in type $A$, Harada--Tymoczko \cite{Harada2011positive} gave a positive equivariant Monk formula for the product of a cohomology-degree-2 Peterson Schubert class (that is, $p_{\{i\}}$ for some $i\in \Delta$) with an arbitrary Peterson Schubert class, and Bayegan--Harada \cite{bayegan2013giambelli} gave an equivariant Giambelli formula to express the Peterson Schubert class $p_I$ in terms of the ring generators $p_{\{i\}}$'s. Drellich \cite{drellich2015monk} gave the equivariant Monk rule and Giambelli formula for all Lie types using a type-by-type analysis, and Goldin--Singh \cite{goldin2021equivariant} proved the equivariant Giambelli formula and the equivariant Monk rule using type-independent methods. 

While the positivity of structure constants in the Peterson Schubert calculus was recently established using the aforementioned geometric and combinatorial methods, the above Question \ref{ques-intro} remains unresolved in general, and it is not known whether these constants can be calculated directly and uniformly from the intrinsic Lie-theoretic data of the corresponding root system. This paper addresses this problem. In this paper, we give a simple, explicit, positive, and type-uniform formula for all structure constants in \eqref{eq-def of SC} of the Peterson Schubert calculus in arbitrary Lie types, which completely solves the above question in a type-uniform way. Our formula is a concise algebraic formula, determined only by the Cartan matrix. As far as we know, this is the first example of an explicit, complete, and type-uniform positive algebraic formula in general Schubert calculus. We hope that our method can shed some new light on related structure constants problems in Schubert calculus. We derive this formula from an algebraic viewpoint, which is different from the above-mentioned work. We now present our main results.

Let $C_\Delta:=\left(\left\langle\alpha_i, \alpha_j\right\rangle\right)_{i, j \in \Delta}=(c_{ij})_{i, j \in \Delta}$ be the Cartan matrix associated with the semisimple Lie algebra $\mathfrak{g}$ of $G$. Harada--Horiguchi--Masuda \cite{harada2015equivariant} gave an explicit and beautiful Borel-type presentation of the equivariant cohomology ring of the Peterson varieties in all Lie types; see Theorem \ref{thm-HHMpre}.
We have the following

\begin{theorem}[Peterson Schubert monomials]
Under the isomorphism \eqref{eq-HHMpre}, the Peterson Schubert class $p_I$, $I \subset \Delta$, is represented by the monomial 
\begin{equation} 
\frac{\operatorname{det}\left(C_I\right)}{\left|W_I\right|}\prod_{i \in I} \varpi_i, 
\end{equation}
where $\operatorname{det}\left(C_I\right)$ is the determinant of the Cartan sub-matrix $C_I$ determined by $I\subset\Delta$ and $\left|W_I\right|$ is the order of the parabolic subgroup $W_I$ of the Weyl group $W$ determined by $I\subset\Delta$.    
\end{theorem}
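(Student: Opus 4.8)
The plan is to combine the Harada--Horiguchi--Masuda presentation (Theorem~\ref{thm-HHMpre}) with equivariant localization. Write $H_S^*(\Pet_G;\mathbb{Q})\cong\mathbb{Q}[t][\varpi_1,\dots,\varpi_n]/\mathcal{I}$ for the isomorphism \eqref{eq-HHMpre}; by that theorem the squarefree monomials $\big\{\prod_{i\in J}\varpi_i\big\}_{J\subseteq\Delta}$ form a $\mathbb{Q}[t]$-basis of the left-hand side, each $\varpi_i$ is the pullback $\iota^*\sigma_{s_i}$ of the degree-$2$ Schubert class, and since $m(s_i)=1$ we have $p_{\{i\}}=\varpi_i$, so the cases $|I|\le 1$ hold on the nose. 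The $S$-fixed points of $\Pet_G$ are indexed by the subsets $K\subseteq\Delta$; write $x_K$ for the corresponding fixed point and $w(K)\in W$ for its image in $G/B$, normalized so that $w(K)$ fixes $\varpi_i$ exactly when $i\notin K$ and so that $v\le w(K)$ in Bruhat order exactly when $\operatorname{supp}(v)\subseteq K$. Since $\Pet_G$ is equivariantly formal for the $S$-action, $H_S^*(\Pet_G;\mathbb{Q})$ embeds into $\prod_{K\subseteq\Delta}H_S^*(x_K)$, so every identity may be checked after restriction to the fixed points.

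First I would record two restriction computations. On one hand, $\varpi_i|_{x_K}=\sigma_{s_i}|_{w(K)}=\big(\varpi_i-w(K)\varpi_i\big)\big|_S$, which is $0$ when $i\notin K$ and a strictly positive integer multiple $n_{i,K}\,t$ of $t$ when $i\in K$. On the other hand, $p_I=\iota^*\sigma_{v_I}/m(v_I)$ and equivariant Schubert classes satisfy $\sigma_v|_w=0$ unless $v\le w$; since a Coxeter element $v_I$ of $W_I$ has support exactly $I$, this gives $p_I|_{x_K}=0$ unless $I\subseteq K$. Now expand $p_I$ in the monomial basis: by homogeneity ($t$ and each $\varpi_i$ have degree $2$, and $p_I$ has degree $2|I|$) one can write $p_I=\sum_{J:\,|J|\le|I|}a^I_J\,t^{\,|I|-|J|}\prod_{j\in J}\varpi_j$ with $a^I_J\in\mathbb{Q}$. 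Fix $i_0\in I$; for every $K\subseteq\Delta\setminus\{i_0\}$ the support condition yields $0=p_I|_{x_K}=t^{|I|}\sum_{J\subseteq K}a^I_J\prod_{j\in J}n_{j,K}$, and induction on $|K|$ starting from $K=\emptyset$, together with $n_{j,K}\ne 0$, forces $a^I_J=0$ for every $J\subseteq\Delta\setminus\{i_0\}$. Letting $i_0$ range over $I$ kills every $a^I_J$ with $J\not\supseteq I$; combined with $|J|\le|I|$ only $J=I$ survives, with no remaining power of $t$, so $p_I=\lambda_I\prod_{i\in I}\varpi_i$ for a single rational scalar $\lambda_I>0$. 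This step is comparatively soft.

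It remains to show $\lambda_I=\det(C_I)/|W_I|$, and this is where the real work lies. The scalar depends only on the Levi $L_I$: the Peterson Schubert cycle $X_I$ dual to $p_I$ (Theorem~\ref{thm-PSbasis}) may be identified with $\Pet_{L_I}$, under which $\varpi_i$ pulls back to the analogous generator of $H_S^*(\Pet_{L_I})$, so $\langle p_I,[X_I]\rangle=1$ becomes $\lambda_I\int_{\Pet_{L_I}}\prod_{i\in I}\varpi_i=1$. Hence it suffices to prove, for an arbitrary semisimple group $H$ of rank $n$, that $\int_{\Pet_H}\prod_{i=1}^n\varpi_i=|W_H|/\det(C_H)$. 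By equivariant localization this integral equals $\sum_K \prod_i(\varpi_i|_{x_K})\big/e_S(T_{x_K}\Pet_H)$, and by the first restriction computation only the term $K=\Delta$ contributes: the numerator there is $\prod_i\big((\varpi_i-w_0\varpi_i)|_S\big)=\prod_i\big((\varpi_i+\varpi_{i^*})|_S\big)$ (with $i\mapsto i^*$ the diagram involution), while the denominator is $\prod_i(m_i\,t)$, where $m_1,\dots,m_n$ are the exponents of $W_H$, i.e.\ the $S$-weights of $T_{x_\Delta}\Pet_H$. The problem thus reduces to the type-uniform Cartan identity
\begin{equation*}
\prod_{i=1}^n\langle\varpi_i+\varpi_{i^*},\rho^\vee\rangle=\frac{|W_H|\prod_{i=1}^n m_i}{\det(C_H)},
\end{equation*}
equivalently $2^n\prod_i\langle\varpi_i,\rho^\vee\rangle=\big(\prod_i d_i(d_i-1)\big)/\det(C_H)$ with $d_i=m_i+1$ the fundamental degrees.

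I expect the main obstacle to be precisely this last identification of the scalar: establishing the displayed Cartan identity uniformly (or, as an alternative route, computing $p_I|_{x_I}$ directly from the equivariant localization formula for Schubert classes and simplifying the resulting product over $\Phi_I^+$). By contrast, the reduction $p_I=\lambda_I\prod_{i\in I}\varpi_i$ and the verification that only the regular fixed point contributes to the relevant integral are largely bookkeeping with the presentation and the fixed-point restrictions.
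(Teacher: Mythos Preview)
Your route is genuinely different from the paper's, and the gap you flag at the end is real. The paper's proof is a five-line computation: it plugs the equivariant Giambelli formula $\iota^*\sigma_{v_I}=\frac{R(v_I)}{|I|!}\prod_{i\in I}\iota^*\sigma_{s_i}$ (equation~\eqref{eq-Giambelli}) and the intersection-multiplicity formula $m(v_I)=\frac{R(v_I)\,|W_I|}{|I|!\,\det(C_I)}$ (equation~\eqref{eq-IMF}), both due to Goldin--Singh and already recorded in Section~\ref{sec-pre-Cohomology}, into the definition $p_I=\iota^*\sigma_{v_I}/m(v_I)$. With $\iota^*\sigma_{s_i}=\varpi_i$ the factors $R(v_I)$ and $|I|!$ cancel and $p_I=\frac{\det(C_I)}{|W_I|}\varpi_I$ drops out for connected $I$; the disconnected case follows because every quantity involved is multiplicative over connected components. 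No localization, no integral, no Cartan identity is needed.

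Your first step, the reduction $p_I=\lambda_I\varpi_I$ via fixed-point restrictions, is correct but recovers only the \emph{shape} of the Giambelli formula. For the scalar you land on $\int_{\Pet_H}\prod_i\varpi_i=|W_H|/\det(C_H)$, which is precisely Horiguchi's \cite[Theorem~1.1.(3)]{Horiguchi2024mixed}; the paper obtains that formula as a \emph{consequence} of the present theorem (see the remark immediately after the proof of Theorem~\ref{thm-PSM}), not as an input. Your localization attempt to reprove it independently leaves several unjustified ingredients---the identification $\Pet_{G,I}\cong\Pet_{L_I}$ compatibly with the $\varpi_i$ (only alluded to in a remark in the paper, with a type-$A$ reference), the claim that the $S$-weights on $T_{x_\Delta}\Pet_H$ are exactly the exponents, and finally the displayed Cartan identity itself. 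So the proposal is incomplete exactly where you say it is, and the missing content is essentially the Goldin--Singh formulas \eqref{eq-Giambelli} and \eqref{eq-IMF}, which the paper simply quotes.
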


Using the above Peterson Schubert monomials and the Harada--Horiguchi--Masuda presentation \eqref{eq-HHMpre}, we can expand the product of two monomials into the monomial basis representing the Peterson Schubert classes, and compute all the structure constants in \eqref{eq-def of SC}. 
 
To state our main theorem, we first need to introduce some notation. For any $K\subset\Delta$, we use $[C_{K}^{-1}]_{j,k}$ to denote the $(j, k)$-entry of the matrix $C_{K}^{-1}$, which is the inverse of the Cartan sub-matrix $C_{K}$ determined by $K$. 
Suppose $|\Delta|=n$, we use the notation $(\quad)_{J,K\subset\Delta}$ to denote a $2^n\times 2^n$ matrix, with rows indexed by $J\subset\Delta$ and columns indexed by $K\subset\Delta$, where the ordering of the subsets of $\Delta$ is the lexicographical order, and we use the notation $[\quad]_{J,K}$ to denote the $(J, K)$-entry of the matrix $(\quad)_{J,K}$. The following is our main theorem.

\begin{theorem}[Structure constants of equivariant Peterson Schubert calculus] \label{thm-intromain}
With the notation as above. Suppose $I=\{i_1, ..., i_l\} \subset \Delta$. Then the structure constant $c_{I, J}^K$, defined in \eqref{eq-def of SC}, is given by 
$$
c_{I, J}^K=\left[D_{i_1} \cdots D_{i_{\ell}}\right]_{J, K} \frac{\operatorname{det}\left(C_I\right) \operatorname{det}\left(C_J\right)\left|W_K\right|}{\left|W_I\right|\left|W_J\right| \operatorname{det}\left(C_K\right)},
$$
where $D_i:=(d^{K}_{i, J})_{J, K\subset\Delta}$ and
\begin{equation} \label{eq-intro-matrix}
d^{K}_{i, J}=\begin{cases}
\frac{[C_{K}^{-1}]_{i,s}}{[C_{K}^{-1}]_{s,s}}, &\text{ if } i\in K, |K|=|J|+1, \text{ and } K\setminus J=\{s\},\\
2t\sum_{k\in K}[C_K^{-1}]_{i,k}, &\text{ if }i\in K \text{ and }K=J,\\
0, &\text{ otherwise }.
\end{cases}
\end{equation}
\end{theorem}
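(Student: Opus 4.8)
The plan is to reduce the computation of every structure constant $c_{I,J}^K$ to the single operation ``multiply by a degree-two Peterson Schubert class,'' and then to iterate it. For $K\subseteq\Delta$ set $q_K:=\prod_{k\in K}\varpi_k$. By the Peterson Schubert monomial formula stated above, $p_K=\tfrac{\det(C_K)}{|W_K|}\,q_K$, so since $\{p_K\}_{K\subseteq\Delta}$ is a basis of $H_S^*(\Pet_G;\mathbb{Z})$ by Theorem~\ref{thm-PSbasis}, the monomials $\{q_K\}_{K\subseteq\Delta}$ form a $\mathbb{Q}[t]$-basis of $H_S^*(\Pet_G;\mathbb{Q})$; in particular $p_{\{i\}}=\tfrac{\det(C_{\{i\}})}{|W_{\{i\}}|}\varpi_i=\varpi_i$, hence $p_I=\tfrac{\det(C_I)}{|W_I|}\,\varpi_{i_1}\cdots\varpi_{i_l}$ for $I=\{i_1,\dots,i_l\}$. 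Let $M^i$ denote the matrix of the operator ``multiply by $\varpi_i$'' on the basis $\{q_K\}$, so that $\varpi_i q_J=\sum_K (M^i)_{J,K}\,q_K$. Then $\varpi_{i_1}\cdots\varpi_{i_l}q_J$ has $q_K$-coefficient $(M^{i_1}\cdots M^{i_l})_{J,K}$ --- the order of the factors is immaterial, as these operators commute --- and feeding this into $p_I p_J=\tfrac{\det(C_I)\det(C_J)}{|W_I||W_J|}\,\varpi_{i_1}\cdots\varpi_{i_l}q_J$ and rewriting $q_K=\tfrac{|W_K|}{\det(C_K)}p_K$ produces exactly the formula of Theorem~\ref{thm-intromain}, \emph{provided} $(M^i)_{J,K}=d^K_{i,J}$ for all $i,J,K$. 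So the whole theorem reduces to this identity.

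To prove it I would recall the defining relations of the Harada--Horiguchi--Masuda presentation~\eqref{eq-HHMpre} (Theorem~\ref{thm-HHMpre}): they are the $n$ quadratic relations $\varpi_i\cdot\big(\sum_{j\in\Delta}c_{ij}\varpi_j\big)=2t\,\varpi_i$, one for each $i\in\Delta$. If $i\notin J$ then $\varpi_i q_J=q_{J\cup\{i\}}$ is immediate, matching $d^{J\cup\{i\}}_{i,J}=1$, so assume $i\in J$. Multiplying the $i$-th relation by $q_{J\setminus\{i\}}$ and using $\varpi_j q_J=q_{J\cup\{j\}}$ for $j\in\Delta\setminus J$, we get, for each $i\in J$,
\[
\sum_{j\in J}c_{ij}\,(\varpi_j q_J)=2t\,q_J-\sum_{j\in\Delta\setminus J}c_{ij}\,q_{J\cup\{j\}} .
\]
Reading the elements $\varpi_j q_J$ with $j\in J$ as the unknowns, this is a linear system of size $|J|$ whose coefficient matrix is the Cartan submatrix $C_J$; since $C_J$ is the Cartan matrix of a finite-type root subsystem it is invertible, and solving expresses $\varpi_i q_J$ as a $\mathbb{Q}[t]$-combination of $q_J$ and of the classes $q_{J\cup\{j\}}$, $j\in\Delta\setminus J$, only --- which is precisely the ``otherwise $0$'' clause of~\eqref{eq-intro-matrix} --- namely
\[
\varpi_i q_J=2t\Big(\sum_{k\in J}[C_J^{-1}]_{i,k}\Big)q_J-\sum_{j\in\Delta\setminus J}\Big(\sum_{k\in J}[C_J^{-1}]_{i,k}\,c_{kj}\Big)q_{J\cup\{j\}}.
\]
The $q_J$-coefficient is already in the desired form $2t\sum_{k\in J}[C_J^{-1}]_{i,k}$, so it remains only to verify the purely linear-algebraic identity
\[
-\sum_{k\in J}[C_J^{-1}]_{i,k}\,c_{kj}=\frac{[C_{J\cup\{j\}}^{-1}]_{i,j}}{[C_{J\cup\{j\}}^{-1}]_{j,j}}\qquad(j\in\Delta\setminus J),
\]
which holds for any invertible matrix in place of $C_{J\cup\{j\}}$: writing $C_{J\cup\{j\}}=\bigl(\begin{smallmatrix}C_J&b\\ c^{\mathsf{T}}&2\end{smallmatrix}\bigr)$ with $b=(c_{kj})_{k\in J}$, the Schur-complement form of the inverse gives $[C_{J\cup\{j\}}^{-1}]_{j,j}=(2-c^{\mathsf{T}}C_J^{-1}b)^{-1}$ and $[C_{J\cup\{j\}}^{-1}]_{\bullet,j}=-(2-c^{\mathsf{T}}C_J^{-1}b)^{-1}C_J^{-1}b$ for the column through $j$, whose ratio is $-(C_J^{-1}b)_i=-\sum_{k\in J}[C_J^{-1}]_{i,k}c_{kj}$ (equivalently, by Cramer's rule the left side is the quotient of the $(j,i)$ and $(j,j)$ cofactors of $C_{J\cup\{j\}}$, the latter equal to $\det(C_J)$). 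This yields $(M^i)_{J,K}=d^K_{i,J}$ and hence Theorem~\ref{thm-intromain}.

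I do not expect a single serious obstacle here: the proof is short once one notices the decisive move, namely that multiplying the Harada--Horiguchi--Masuda relations through by $q_{J\setminus\{i\}}$ turns the a~priori complicated reduction of $\varpi_i q_J$ into a linear system of size $|J|$ governed by the Cartan submatrix $C_J$, after which the support statement and all the coefficients are forced. The points demanding care are routine: confirming $p_{\{i\}}=\varpi_i$ and, more generally, tracking the normalizing scalars $\det(C_I)/|W_I|$ when passing between the monomial basis $\{q_K\}$ and the Peterson Schubert basis $\{p_K\}$; recognizing the coefficient identity above as the Schur-complement (equivalently Cramer) formula; and observing that, although the computation naturally lives over $\mathbb{Q}[t]$, the resulting structure constants automatically lie in $\mathbb{Z}_{\ge 0}[t]$ by \cite{Goldin2024positivity}.
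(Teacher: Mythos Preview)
Your proof is correct and takes a genuinely different---and in several respects cleaner---route than the paper's. The overall reduction is the same: pass to the monomial basis $\{q_K\}$, show that multiplying by $\varpi_i$ is given by the matrix $(d^K_{i,J})_{J,K}$, then iterate and rescale. The divergence is in how each side computes $\varpi_i q_J$ for $i\in J$. The paper uses only the single relation for that fixed $i$, rewrites $\varpi_i^2$, and substitutes repeatedly; this produces an infinite series in powers of the matrix $B_K^{\widehat{s}}$, whose convergence is established via the Perron--Frobenius theorem and whose sum is then evaluated with the Sherman--Morrison formula. You instead multiply \emph{all} of the relations indexed by $k\in J$ through by $q_{J\setminus\{k\}}$, obtaining a finite $|J|\times|J|$ linear system with coefficient matrix $C_J$; inverting $C_J$ gives the answer in one stroke, and the Schur complement (or Cramer) identity converts the coefficient $-\sum_{k\in J}[C_J^{-1}]_{i,k}c_{kj}$ into $[C_K^{-1}]_{i,j}/[C_K^{-1}]_{j,j}$. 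Your approach avoids any analytic step (no spectral radius, no convergence), makes the ``otherwise $0$'' support statement immediate rather than a byproduct of a limit, and arrives directly at the uniform first case of~\eqref{eq-intro-matrix} without needing to treat $s=i$ and $s\neq i$ separately. The paper's route, on the other hand, has the virtue of exhibiting $d^K_{i,J}$ as a manifestly nonnegative walk-counting series before any simplification; your derivation shows the formula but the positivity only becomes visible after invoking the known nonnegativity of $C_K^{-1}$.
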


Since the ordinary cohomology $H^*(\Pet_G; \mathbb{Z})$ vanishes in odd degrees, the Peterson variety $\Pet_G$ is equivariantly formal in the sense of \cite{goresky1998equivariant}. Therefore, by letting $t=0$ in \eqref{eq-intro-matrix}, we get a simple formula for all non-equivariant structure constants with respect to the basis $\left\{p_I\right\}_{I \subset \Delta}$ of $H^*(\Pet_G; \mathbb{Z})$ (we use the same notation to denote the classes under the natural map $H_S^*(\Pet_G; \mathbb{Z}) \rightarrow H^*(\Pet_G; \mathbb{Z})$). 

\begin{theorem} \label{thm-introNmain}
 Suppose $I=\{i_1, ..., i_l\} \subset \Delta$. Then the non-equivariant structure constant $n_{I, J}^K$, defined to be the constant term of $c_{I, J}^K$ in \eqref{eq-def of SC}, is given by 
 $$
n_{I, J}^K=\left[M_{i_1} \cdots M_{i_{\ell}}\right]_{J, K} \frac{\operatorname{det}\left(C_I\right) \operatorname{det}\left(C_J\right)\left|W_K\right|}{\left|W_I\right|\left|W_J\right| \operatorname{det}\left(C_K\right)},
$$ where $M_i:=(m^{K}_{i, J})_{J, K\subset\Delta}$ and 
\begin{equation} \label{eq-intro-Nmatrix}
m^{K}_{i, J}=\begin{cases}
\frac{[C_{K}^{-1}]_{i,s}}{[C_{K}^{-1}]_{s,s}}, &\text{ if } i\in K, |K|=|J|+1, \text{ and } K\setminus J=\{s\},\\
0, &\text{ otherwise }.
\end{cases}
\end{equation}
\end{theorem}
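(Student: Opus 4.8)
Theorem~\ref{thm-introNmain} will be deduced from Theorem~\ref{thm-intromain} by specialization: since $H_S^*(\Pet_G;\mathbb{Z})$ is a free $\mathbb{Z}[t]$-module (Theorem~\ref{thm-PSbasis}), reduction modulo $t$ realizes the restriction homomorphism $H_S^*(\Pet_G;\mathbb{Z})\twoheadrightarrow H^*(\Pet_G;\mathbb{Z})$, under which $p_I$ maps to the ordinary Peterson Schubert class, and since $d^{K}_{i,J}|_{t=0}=m^{K}_{i,J}$ while the scalar $\tfrac{\det(C_I)\det(C_J)|W_K|}{|W_I||W_J|\det(C_K)}$ is $t$-free, it suffices to prove the equivariant Theorem~\ref{thm-intromain}. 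For that, the plan is first to pass to the Harada--Horiguchi--Masuda presentation \eqref{eq-HHMpre} and invoke the Peterson Schubert monomial theorem to write $p_L=\tfrac{\det(C_L)}{|W_L|}\,\varpi_L$ with $\varpi_L:=\prod_{\ell\in L}\varpi_\ell$. Because $\det(C_L),|W_L|>0$, the family $\{\varpi_L\}_{L\subseteq\Delta}$ is a $\mathbb{Q}[t]$-basis of $H_S^*(\Pet_G;\mathbb{Q})$, since $\{p_L\}$ is and the two families differ by the invertible diagonal rescaling $p_L=\tfrac{\det(C_L)}{|W_L|}\varpi_L$ with $\tfrac{\det(C_L)}{|W_L|}\in\mathbb{Q}_{>0}$. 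Substituting into \eqref{eq-def of SC} and clearing these scalars reduces Theorem~\ref{thm-intromain} to the purely ring-theoretic identity
\begin{equation}\label{eq-prop-target}
\varpi_I\cdot\varpi_J=\sum_{K\subseteq\Delta}\Bigl[(d^{K}_{i_1,J})_{J,K}\cdots(d^{K}_{i_l,J})_{J,K}\Bigr]_{J,K}\,\varpi_K\qquad\text{in }H_S^*(\Pet_G;\mathbb{Q}).
\end{equation}

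The heart of the matter is a one-variable Pieri-type rule: for all $J\subseteq\Delta$ and $i\in\Delta$,
\begin{equation}\label{eq-prop-pieri}
\varpi_i\cdot\varpi_J=\sum_{K\subseteq\Delta}d^{K}_{i,J}\,\varpi_K .
\end{equation}
If $i\notin J$ this is the tautology $\varpi_i\varpi_J=\varpi_{J\cup\{i\}}$, and the right side correctly collapses to the single term $K=J\cup\{i\}$, with coefficient $[C_K^{-1}]_{i,i}/[C_K^{-1}]_{i,i}=1$. If $i\in J$, the plan is to take the relation of \eqref{eq-HHMpre} indexed by $i$, of the form $\varpi_i\bigl(\sum_{j\in\Delta}\langle\alpha_i,\alpha_j\rangle\varpi_j\bigr)=2t\,\varpi_i$, multiply it by $\varpi_{J\setminus\{i\}}$, and rearrange to get, for every $i\in J$, the linear relation $\sum_{j\in J}\langle\alpha_i,\alpha_j\rangle\,(\varpi_j\varpi_J)=2t\,\varpi_J-\sum_{s\in\Delta\setminus J}\langle\alpha_i,\alpha_s\rangle\,\varpi_{J\cup\{s\}}$. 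This is a linear system with coefficient matrix $C_J$ for the unknowns $\{\varpi_j\varpi_J\}_{j\in J}$ over the $\mathbb{Q}$-algebra $H_S^*(\Pet_G;\mathbb{Q})$; applying $C_J^{-1}$ gives
\[
\varpi_i\varpi_J=2t\Bigl(\sum_{k\in J}[C_J^{-1}]_{i,k}\Bigr)\varpi_J-\sum_{s\in\Delta\setminus J}\Bigl(\sum_{k\in J}[C_J^{-1}]_{i,k}\,\langle\alpha_k,\alpha_s\rangle\Bigr)\varpi_{J\cup\{s\}} .
\]
Matching this against \eqref{eq-intro-matrix}, the rule \eqref{eq-prop-pieri} then reduces to the linear-algebraic identity $[C_{J\cup\{s\}}^{-1}]_{i,s}/[C_{J\cup\{s\}}^{-1}]_{s,s}=-\sum_{k\in J}[C_J^{-1}]_{i,k}\,\langle\alpha_k,\alpha_s\rangle$ for $i\in J$, $s\in\Delta\setminus J$, which we would verify by writing $C_{J\cup\{s\}}$ in block form with the row and column labelled $s$ placed last: the Schur-complement formula gives $[C_{J\cup\{s\}}^{-1}]_{J,s}=-\,C_J^{-1}b\cdot[C_{J\cup\{s\}}^{-1}]_{s,s}$ with $b=(\langle\alpha_k,\alpha_s\rangle)_{k\in J}$, and restoring $s$ to its position merely conjugates $C_{J\cup\{s\}}$, hence $C_{J\cup\{s\}}^{-1}$, by one and the same permutation, so no signs intervene.

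The last step is to apply \eqref{eq-prop-pieri} repeatedly to $\varpi_I\varpi_J=\varpi_{i_1}\cdots\varpi_{i_l}\varpi_J$, each application sending a square-free monomial to a $\mathbb{Q}[t]$-combination of square-free monomials; this yields $\varpi_I\varpi_J=\sum_{K\subseteq\Delta}\bigl(\sum_{J=A_0,A_1,\dots,A_l=K}\prod_{r=1}^{l}d^{A_r}_{i_r,A_{r-1}}\bigr)\varpi_K$, whose inner sum is exactly the $(J,K)$-entry of the matrix product $(d^{K}_{i_1,J})_{J,K}\cdots(d^{K}_{i_l,J})_{J,K}$, giving \eqref{eq-prop-target}. (Commutativity of $H_S^*(\Pet_G;\mathbb{Q})$ forces this matrix product to be independent of the order chosen for $i_1,\dots,i_l$ — a built-in consistency check.) Re-inserting the scalars $\tfrac{\det(C_L)}{|W_L|}$ and reading off coefficients in the basis $\{p_K\}$ then yields the asserted formula for $c_{I,J}^K$, and setting $t=0$ gives Theorem~\ref{thm-introNmain}. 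Finally, since the inverse of a finite-type Cartan matrix has non-negative entries, every $d^{K}_{i,J}$ --- hence every $c_{I,J}^K$ --- is visibly a polynomial in $t$ with non-negative coefficients, so the formula also re-derives, transparently, the positivity of \cite{Goldin2024positivity}.

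The only genuinely computational input is the matrix identity closing the previous paragraph, and it is just one application of the block-inverse (Schur-complement) formula, valid for arbitrary bordered matrices (no symmetry of $C_\Delta$ is used), so it is not a real obstacle. The point demanding care --- and the reason for carrying out the whole computation inside the ring $H_S^*(\Pet_G;\mathbb{Q})$, through the linear system above and the known basis $\{\varpi_K\}$, rather than as a term-rewriting reduction on monomials --- is that a naive reduction using the quadratic relations of \eqref{eq-HHMpre} would first need to be shown confluent and terminating; working inside the ring sidesteps this entirely.
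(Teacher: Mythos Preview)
Your proposal is correct, and it takes a genuinely different route to the Pieri rule \eqref{eq-prop-pieri} than the paper does.

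The paper (Theorem~\ref{thm-main} and Lemmas~\ref{lemma-matrixpower}--\ref{lem: manifestly positive formula}) expands $\varpi_i\varpi_J$ for $i\in J$ \emph{iteratively}, repeatedly substituting $\varpi_k^2=\sum_{j\ne k}b_{kj}\varpi_k\varpi_j+t\varpi_k$; this produces an infinite series in powers of $B_K^{\widehat s}$, whose convergence is established via Perron--Frobenius, and the resulting expression $[B_K^{\widehat s}(E-B_K^{\widehat s})^{-1}]_{i,s}$ is then massaged into $[C_K^{-1}]_{i,s}/[C_K^{-1}]_{s,s}$ by Sherman--Morrison. Your approach instead takes \emph{all} relations at once (one for each $i\in J$), recognises them as a $|J|\times|J|$ linear system over $H_S^*(\Pet_G;\mathbb{Q})$ with invertible coefficient matrix $C_J$, and inverts in one stroke; the required identity $-\sum_{k\in J}[C_J^{-1}]_{i,k}c_{ks}=[C_K^{-1}]_{i,s}/[C_K^{-1}]_{s,s}$ then falls out of the block-inverse (Schur-complement) formula for $C_K=\begin{psmallmatrix}C_J&u\\v^{\mathrm T}&2\end{psmallmatrix}$. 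This is strictly more elementary: no spectral theory, no convergence, and it bypasses the intermediate description via the Coxeter adjacency matrix entirely. The paper's detour does buy something --- it exposes the link between the Pieri coefficients and the eigenvalues $2\cos(m_i\pi/h)$ of $A_K$, and the series form $\sum_m(B_K^{\widehat s})^m$ has a combinatorial reading as a weighted path count on the Dynkin diagram --- but for the bare statement of Theorems~\ref{thm-intromain} and~\ref{thm-introNmain} your argument is shorter and cleaner. Your final remark, that working inside the ring with the known basis $\{\varpi_K\}$ avoids having to justify a term-rewriting procedure, is exactly the point, and is what the paper's convergence argument is implicitly doing as well.
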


To prove the above theorems, we transform the structure constants in \eqref{eq-intro-matrix} into an infinite matrix series (see the proof of Theorem \ref{thm-main}) related to the \emph{Coxeter adjacency matrix} $A_K$, defined as $A_K:=2E-C_K$, where $E$ is the identity matrix. We use the classical \emph{Perron--Frobenius theorem} to prove that this matrix series is convergent and related to the inverse of a matrix; see Lemma \ref{lem: convergence of matrix series}. Finally, we use the \emph{Sherman--Morrison formula} to compute the matrix inverse, which is simplified to the final formula; see Lemma \ref{lem: manifestly positive formula}.

It is well known that the entries of the inverse of any Cartan matrix are all non-negative (see, for example, \cite[Exercise 13.8]{humphreys1972}), hence the entries of all the matrices $(d^{K}_{i, J})_{J, K}$ in Theorem \ref{thm-intromain} are polynomials in $t$ with non-negative coefficients.
Since the entries in the product of such matrices remain polynomials in $t$ with non-negative coefficients, our formula is indeed a positive formula, which provides a new purely algebraic proof of the positivity of equivariant structure constants of the  Peterson Schubert calculus. We also provide a simple criterion for when these structure constants are non-zero; see Corollary \ref{cor-nonzero} and Remark \ref{rmk-nonzero}.

The relationship between the (non-equivariant) Peterson Schubert calculus and the mixed Eulerian numbers (which
were introduced and studied by Postnikov
\cite{postnikov2009permutohedra}) has been actively explored very recently; see \cite{Berget2023log}, \cite{Nadeau2023permutahedral}, and \cite{Horiguchi2024mixed}. These numbers compute the mixed volumes of $\Phi$-hypersimplices, see Section \ref{sec-Applications} for the precise definitions. The origins of this connection lie in the realization of the Peterson variety as a flat degeneration of a smooth projective toric variety, the $W$-permutohedral variety; see, for example, 
\cite{Abe2020Geometry,Klyachko1985orbits,Klyachko1995toric}. In \cite{Horiguchi2024mixed}, Horiguchi provided a combinatorial model introduced in \cite{abe2024geometry} for the computation of the mixed $\Phi$-Eulerian numbers when $\Phi$ is of type $A$ and derived a type-by-type computation for the mixed $\Phi$-Eulerian numbers for general Lie types by iteratively applying the (non-equivariant version of) Monk formula from Drellich in \cite{drellich2015monk}. As an application of our main theorem, we derive a type-uniform formula for the mixed $\Phi$-Eulerian numbers in arbitrary Lie type.

\begin{theorem}\label{thm-introMEN}
    Let \(\Phi\) be an irreducible root system of rank $n$. Let \(c_1, \ldots, c_n\) be non-negative integers with \(c_1 + \cdots + c_n = n\).  
The mixed ${\Phi}$-Eulerian number $A_{c_{1}, \dots, c_{n}}^{\Phi}$ can be computed using the following formula:
\[
A_{c_{1},\ldots,c_{n}}^{\Phi} = \frac{|W_{\Phi}|}{\det(C_\Phi)} 
\left[ M_1^{c_{1}} \cdots M_n^{c_{n}} \right]_{\emptyset, \Delta}.
\]
where $M_i$ is the matrix defined in \eqref{eq-intro-Nmatrix}.
The notation
\(
[\quad]_{\emptyset, \Delta}
\)
denotes the entry in the row indexed by $\emptyset$ and the column indexed by $\Delta$.
\end{theorem}

Compared with the computations in \cite{Horiguchi2024mixed}, our formula for the mixed $\Phi$-Eulerian numbers is quite simple and direct, which avoids the need to discuss the changes in the Lie types case-by-case when considering sub-root systems as in \cite[Section 7]{Horiguchi2024mixed}.

The remainder of the paper is organized as follows. In Section \ref{sec-preli}, we introduce the notation and some preliminary results which we use throughout this paper. In Section \ref{sec-Structure}, we prove our main theorems. In Section \ref{sec-Applications}, we give the application to the mixed Eulerian numbers. 

\subsection*{Acknowledgments}
The first author would like to express his gratitude to Hiraku Abe and Haozhi Zeng for helpful discussions. 
An extended abstract of this paper will appear in the Proceedings of the 38th Conference on Formal Power Series and Algebraic Combinatorics (Seattle), the authors thank Yibo Gao for some useful comments and thank anonymous referees of the extended abstract for suggestions which helped improve the exposition. Computer calculations in service of this paper were coded in SageMath. 
The first author is partially supported by NSFC Grant No. 12471309.

\section{Preliminaries} \label{sec-preli}

In this section, we collect our notation and preliminary results for later use. 

\subsection{Flag varieties and Schubert varieties} \label{sec-pre-flag}
Let $G$ be a complex connected, simply connected, semisimple algebraic group of rank $n$, $B\subset G$ be a Borel subgroup, and $B^{-}\subset G$ be the opposite Borel.  Write $T:=B \cap B^{-}$ for the maximal torus common to both Borel subgroups. Let $\mathfrak{g}=\Lie(G)$, $\mathfrak{b}=\Lie(B)$, and $\mathfrak{h}=\Lie(T)$ be the corresponding Lie algebras. Denote by $\Phi$ the root system determined by $(G,T)$, and by
$\Delta=\{\alpha_{1},\dots,\alpha_{n}\}$ the simple roots specified by $(B,T)$. We abuse the notation by letting $\Delta$ also denote the index set of simple roots. The Weyl group $W:=\langle s_{i}\mid i\in\Delta\rangle$ is a finite Coxeter group. For any subset $I\subset\Delta$, we have the standard parabolic subgroup $P_{I}\subset G$,
whose Weyl group is $W_{I}= \langle s_{i}\mid i\in I\rangle$, a parabolic subgroup of $W$.

Recall that the group \(G\) admits the Bruhat decomposition
\[
G=\bigsqcup_{w\in W} BwB,
\]
a disjoint union of \(B\)-double cosets indexed by the Weyl group elements. 
Passing to the flag variety, we obtain
\begin{equation} \label{eq-Bruhat}
    G/B=\bigsqcup_{w\in W} BwB/B,
\end{equation}
where each \(BwB/B\) is isomorphic to an affine space, which is called the \emph{Schubert cell} \(X^{\circ}_{w}:=BwB/B\).

Similarly, we have the opposite Bruhat decomposition
\begin{equation} \label{eq-OBruhat}
G/B=\bigsqcup_{w\in W} B^-wB/B,
\end{equation}
where each \(B^-wB/B\) is again isomorphic to an affine space, called the \emph{opposite Schubert cell} \(\Omega^{\circ}_{w}:=B^-wB/B\).

For every \(w\in W\), we set
\[
X_{w}:=\overline{X^{\circ}_{w}}=\overline{BwB/B}
\quad\text{and}\quad
\Omega_{w}:=\overline{\Omega^{\circ}_{w}}=\overline{B^{-}wB/B},
\]
the Schubert and opposite Schubert varieties in \(G/B\), respectively. It follows from \eqref{eq-OBruhat} that the equivariant cohomology classes $\sigma_w \in H_T^*(G/B;\mathbb{Z})$, which are Poincaré dual to the fundamental class $\left[\Omega_{w}\right]$, form a basis $\left\{\sigma_w \mid w \in W\right\}$ of $H_T^*(G / B;\mathbb{Z})$ as a module over $H_T^*(pt)$, called the \emph{equivariant Schubert classes}.

\subsection{Peterson varieties and some geometric constructions} \label{sec-pre-Peterson}
We recall the definition of the Peterson variety in any Lie type and provide some related geometric constructions from the existing literature.

Recall that we have the Cartan decomposition of $\mathfrak{g}$ into root spaces
$$\mathfrak{g}=\mathfrak{h}\oplus\bigoplus_{\alpha\in\Phi}\mathfrak{g}_{\alpha}.$$
Fix a regular nilpotent element $e\in\mathfrak{b}$.  The \emph{Peterson variety} $\Pet_{G}$ is a closed subvariety of the flag variety $G/B$, defined by
\begin{equation}\label{eq-def of Pet}
\Pet_{G}:=\Bigl\{gB\in G/B\;\Big|\;
\text{Ad}_{g^{-1}}(e)\in\mathfrak{b}\oplus
{\textstyle\bigoplus\limits_{\alpha\in\Delta}}\mathfrak{g}_{-\alpha}\Bigr\}.
\end{equation}
It is known that the Peterson variety $\Pet_{G}$ is always irreducible with complex dimension $n:=\rank G$ (see \cite{peterson1997quantum} or \cite[Proposition 6.2]{lam2016total}), but it is singular and even non-normal in general \cite{kostant1996flag}. The Peterson varieties also form a special class of a larger family of subvarieties of the flag variety, namely the \emph{Hessenberg varieties} introduced in \cite{de1992hessenberg}; see \cite{abe2017survey} for a very nice survey.

\begin{eg}[Type $A_{n-1}$ Peterson variety] \label{ex:typeA}
In Lie type $A$, the Peterson variety can be easily described in a concrete way. Let $n\ge 1$ be an integer and let
\[
\mathrm{Fl}_n:=\bigl\{V_\bullet=(0=V_0\subsetneq V_1\subsetneq \dots \subsetneq V_n=\mathbb C^n)\mid \dim_{\mathbb C}V_i=i\bigr\}
\]
denote the full flag variety of $\mathbb C^n$.  Fix the regular nilpotent matrix
\[
N:=
\begin{pmatrix}
0&1&0&\cdots&0\\
0&0&1&\cdots&0\\
\vdots&\vdots&\ddots&\ddots&\vdots\\
0&0&\cdots&0&1\\
0&0&\cdots&0&0
\end{pmatrix}\in\mathbb C^{n\times n}.
\]
The Peterson variety $\mathrm{Pet}_n\subset\mathrm{Fl}_n$ is defined by
\[
\mathrm{Pet}_n:=\bigl\{V_\bullet\in\mathrm{Fl}_n\mid NV_i\subset V_{i+1}\ \text{for all}\ 1\le i\le n-1\bigr\},
\]
where $NV_i$ denotes the image of $V_i$ under the linear map $N\colon\mathbb C^n\to\mathbb C^n$.
In this setting, the Peterson varieties are also known as the \emph{regular nilpotent} Hessenberg varieties with \emph{Hessenberg function} $h(i)=i+1$ for $i=1,2, \ldots, n-1$.
\end{eg}

It is known that there is a one-dimensional sub-torus $S\subset T$ acting on the Peterson variety $\Pet_G$ and we set $H_S^*(p t)\cong\mathbb{Z}[t]$, where $t$ is a certain character of $S$; see \cite[Section 3.2]{Goldin2024positivity}. By intersecting the Peterson variety $\Pet_G$ with Schubert cells and opposite Schubert cells in the Bruhat decomposition \eqref{eq-Bruhat} and opposite Bruhat decomposition \eqref{eq-OBruhat} of the flag variety $G/B$, we have the following 

\begin{proposition}[\protect{See \cite[Proposition 3.3]{Goldin2024positivity} and \cite[Proposition 6.2]{lam2016total}}]
    The following are equivalent:
\begin{enumerate}
    \item $\Pet_G \cap X_w^\circ \neq \emptyset$;
    \item $\Pet_G \cap  \Omega_w^\circ\neq \emptyset$;
    \item $w = w_I$ for some $I \subset \Delta$,
where $w_{I}$ is the longest element of $W_{I}$.
\end{enumerate}
\end{proposition}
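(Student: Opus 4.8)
The plan is to prove the equivalence of the three conditions by a combination of a dimension/combinatorial argument and the explicit description of the Peterson variety, reducing everything to the well-understood combinatorics of the Bruhat decomposition. The cleanest logical route is to show $(3)\Rightarrow(1)$, $(3)\Rightarrow(2)$, and then $(1)\Rightarrow(3)$ together with $(2)\Rightarrow(3)$; by symmetry of the two Bruhat stratifications it suffices to handle one of the two ``opposite'' implications carefully and obtain the other by a parallel argument (or by invoking the opposite-Borel symmetry of $\Pet_G$).

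First I would recall the standard fact that $\Pet_G\cap X_w^\circ$ and $\Pet_G\cap\Omega_w^\circ$ are locally closed subvarieties whose union over $w\in W$ covers $\Pet_G$, and that $\Pet_G$ is irreducible of dimension $n=\rank G$ by the cited results of Peterson and Lam--Shimozono. The key input for $(3)\Rightarrow(1)$ is that the $T$-fixed point $w_I B$ of $G/B$ lies on $\Pet_G$: this is a direct check from the defining condition \eqref{eq-def of Pet}, since $\mathrm{Ad}_{w_I^{-1}}(e)$ can be computed explicitly on the root spaces and one verifies it lands in $\mathfrak b\oplus\bigoplus_{\alpha\in\Delta}\mathfrak g_{-\alpha}$ precisely because $w_I$ is the longest element of $W_I$ (so $w_I$ sends the simple roots in $I$ to negatives of simple roots, and keeps the roots outside $W_I$ controlled). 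Since $w_I B\in X_{w_I}^\circ$, this gives $\Pet_G\cap X_{w_I}^\circ\neq\emptyset$. The same fixed point lies in $\Omega_{w_I}^\circ$ after accounting for the relevant indexing convention, giving $(3)\Rightarrow(2)$; alternatively one runs the identical computation with $B^-$ in place of $B$.

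For the converse, the main obstacle — and the step I would expect to require the most care — is showing that if $\Pet_G$ meets $X_w^\circ$ (equivalently $\Omega_w^\circ$) then $w$ must be of the form $w_I$. The idea is a Lie-algebra argument at the level of tangent/normal directions: a point $gB\in X_w^\circ$ can be written with $g\in B\dot w B$, and the condition $\mathrm{Ad}_{g^{-1}}(e)\in\mathfrak b\oplus\bigoplus_{\alpha\in\Delta}\mathfrak g_{-\alpha}$ forces constraints on the inversion set of $w$: one shows that for each root $\beta$ with $w^{-1}\beta<0$ and $\beta$ not simple, the component of $\mathrm{Ad}_{g^{-1}}(e)$ in $\mathfrak g_{-w^{-1}\beta}$ (or a related root space) is forced to vanish, which combined with $e$ being \emph{regular} nilpotent (so all its simple-root components are nonzero) forces the inversion set of $w^{-1}$ to be exactly the set of positive roots of some parabolic root subsystem $\Phi_I$. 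That is precisely the characterization of $w=w_I$. I would phrase this using the cited \cite[Proposition 6.2]{lam2016total}, which already contains exactly this statement, and \cite[Proposition 3.3]{Goldin2024positivity} for the equivalence with the opposite cells; thus in the final write-up the ``proof'' is really an assembly: quote these two sources, record the explicit verification that $w_I B\in\Pet_G$, and note that the equivalence $(1)\Leftrightarrow(2)$ follows because $\Pet_G$ is stable under neither $B$ nor $B^-$ individually but the two cell-intersection conditions are known to be controlled by the same combinatorial data (the set of $w$ arising is the same, namely $\{w_I : I\subset\Delta\}$), as established in the cited references.

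Since the statement is explicitly attributed to \cite[Proposition 3.3]{Goldin2024positivity} and \cite[Proposition 6.2]{lam2016total}, the honest proposal is that no new argument is needed: I would simply cite these results. If a self-contained proof were wanted, the route above — explicit fixed-point check for one direction, inversion-set/regularity analysis for the other — is the standard one, with the inversion-set analysis being the genuinely substantive step.
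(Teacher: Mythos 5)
Your proposal matches the paper exactly: this proposition is stated in the preliminaries as a quoted result with no proof given, relying entirely on the citations to \cite[Proposition 3.3]{Goldin2024positivity} and \cite[Proposition 6.2]{lam2016total}, which is precisely what you conclude is the honest route. Your supplementary sketch (the fixed-point check that $w_I B\in\Pet_G$ for one direction, and the inversion-set/regular-nilpotent analysis for the converse) is consistent with the standard arguments in those references, so there is nothing to correct.
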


The geometry of the intersection $\Pet_G  \cap \Omega_{w_{I}}^\circ$ encodes the (small) quantum cohomology ring $qH^*(G^{\vee} / P_I^{\vee})$ of the Langlands dual partial flag variety $G^{\vee} / P_I^{\vee}$, which was Dale Peterson's original motivation in his unpublished work \cite{peterson1997quantum} to introduce and study these remarkable varieties. Furthermore, it is known that the set-theoretic intersection $\Pet_{G,I}^{\circ}:=\Pet_G \cap X_{w_I}^\circ$ is an affine space of dimension $|I|$ (see \cite{tymoczko2007paving} or \cite{Bualibanu2017Peterson}), called a \emph{Peterson cell}. The Bruhat decomposition \eqref{eq-Bruhat} induces an $S$-stable affine paving 
\begin{equation} \label{eq-paveP}
\Pet_{G}=\bigsqcup_{I \subset \Delta} \Pet_{G, I}^{\circ}.
\end{equation}

For $I\subset\Delta$, we set
\begin{equation} \label{eq-PSV}
  \Pet_{G, I}:=\overline{\Pet_{G, I}^{\circ}}=\Pet_{G} \cap X_{w_{I}},\qquad
\Omega_{I}:=\Pet_{G} \cap\Omega_{w_{I}},  
\end{equation}
where $\Pet_{G, I}$ is known as the \emph{Peterson Schubert variety} in the literature.
By the above affine paving \eqref{eq-paveP}, we have the following theorem; see \cite{peterson1997quantum} and \cite{tymoczko2007paving}.

\begin{proposition}
For every $I\subset\Delta$, we have the fundamental class $[\Pet_{G,I}]$ in $H^S_{2|I|}(\Pet_{G};\mathbb{Z})$, and the collection
$\{[\Pet_{G,I}]\mid I\subset\Delta\}$ forms a basis of the total equivariant homology $H^S_{*}(\Pet_{G};\mathbb{Z})$.
\end{proposition}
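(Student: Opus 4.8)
The plan is to deduce the proposition from the $S$-stable affine paving \eqref{eq-paveP} by a standard filtration argument in $S$-equivariant Borel--Moore homology (which is the theory $H^S_*$ refers to, since $\Pet_G$ is singular).

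First I would fix a linear order $I_1,\dots,I_{2^n}$ on the subsets of $\Delta$ refining inclusion, i.e.\ $I_j\subseteq I_k\Rightarrow j\le k$ (e.g.\ order by cardinality, breaking ties arbitrarily). Since $X_{w_I}=\bigsqcup_{w\le w_I}X^\circ_w$ and $w_J\le w_I$ forces $J\subseteq I$, intersecting with $\Pet_G$ and using the Proposition above gives $\Pet_{G,I}=\overline{\Pet^\circ_{G,I}}=\bigsqcup_{J\subseteq I}\Pet^\circ_{G,J}$. Hence the sets $Z_j:=\bigcup_{k\le j}\Pet_{G,I_k}=\bigsqcup_{k\le j}\Pet^\circ_{G,I_k}$ are closed and $S$-stable (the refinement of inclusion ensures each $\Pet_{G,I_k}$ with $k\le j$ is contained in $Z_j$), and they form a filtration
\[
\emptyset=Z_0\subset Z_1\subset\cdots\subset Z_{2^n}=\Pet_G
\]
whose successive open complements $Z_j\setminus Z_{j-1}=\Pet^\circ_{G,I_j}$ are the Peterson cells, each an $S$-stable affine space of dimension $|I_j|$ with $S$ acting linearly in suitable coordinates (cf.\ \cite{tymoczko2007paving, Bualibanu2017Peterson}).

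Then I would run, for each $j$, the long exact sequence attached to the closed subset $Z_{j-1}\subset Z_j$ with open complement $\Pet^\circ_{G,I_j}$,
\[
\cdots\to H^S_k(Z_{j-1})\to H^S_k(Z_j)\to H^S_k(\Pet^\circ_{G,I_j})\to H^S_{k-1}(Z_{j-1})\to\cdots,
\]
and use that $\Pet^\circ_{G,I_j}\cong\mathbb{C}^{|I_j|}$ $S$-equivariantly, so $H^S_k(\Pet^\circ_{G,I_j})\cong H^S_{k-2|I_j|}(\pt)$ is a free $H^S_*(\pt)$-module concentrated in even degrees and generated in degree $2|I_j|$ by $[\Pet^\circ_{G,I_j}]$. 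Inducting on $j$: assuming $H^S_*(Z_{j-1})$ is free over $H^S_*(\pt)$ on $\{[\Pet_{G,I_k}]\}_{k\le j-1}$ and concentrated in even degrees, both outer terms live in even degrees, so the connecting maps vanish and the sequence collapses to short exact sequences $0\to H^S_*(Z_{j-1})\to H^S_*(Z_j)\to H^S_*(\Pet^\circ_{G,I_j})\to 0$, which split since the quotient is free. Because $\Pet_{G,I_j}\setminus\Pet^\circ_{G,I_j}$ has strictly smaller dimension, the class $[\Pet_{G,I_j}]\in H^S_{2|I_j|}(Z_j)$ restricts to the generator $[\Pet^\circ_{G,I_j}]$, so $\{[\Pet_{G,I_k}]\}_{k\le j}$ is an $H^S_*(\pt)$-basis of $H^S_*(Z_j)$. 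Taking $j=2^n$ proves the proposition and simultaneously places $[\Pet_{G,I}]$ in $H^S_{2|I|}(\Pet_G)$.

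The one point needing genuine care --- everything else being formal bookkeeping with the long exact sequence and dimension counts --- is the input that each Peterson cell is an affine space carrying an $S$-action whose equivariant Borel--Moore homology is free of rank one over $H^S_*(\pt)$ in the single even degree $2|I_j|$; this is exactly the content of the $S$-stable affine paving \eqref{eq-paveP} and the paving results of \cite{tymoczko2007paving}. Alternatively, one may simply invoke the general principle that an $S$-stable affine paving of a variety yields an $H^S_*(\pt)$-basis of its equivariant Borel--Moore homology given by the fundamental classes of the closures of the cells, which is how this statement is recorded in \cite{peterson1997quantum, tymoczko2007paving}.
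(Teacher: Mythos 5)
Your argument is correct and is exactly the route the paper takes: the paper simply asserts the proposition as a consequence of the $S$-stable affine paving \eqref{eq-paveP}, citing Peterson and Tymoczko, and your write-up supplies the standard filtration/long-exact-sequence argument in equivariant Borel--Moore homology that underlies that citation. No discrepancy to report.
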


\subsection{Cohomology ring of Peterson varieties and Peterson Schubert classes} \label{sec-pre-Cohomology}

Write $B=T\ltimes U$ with $U$ the unipotent radical. The canonical projection $\pi\colon B\twoheadrightarrow T$ sends $b=tu\mapsto t$.  Any weight $\mu\colon T\to\mathbb{C}^{\times}$ extends to a character of $B$ by pulling back along $\pi$. Denote the associated one-dimensional $B$-module by $\mathbb{C}_{\mu}$ and consider the line bundle
\[
L_{\mu}:=G\times^{B}\mathbb{C}_{-\mu}.
\]
Sending $\mu$ to the equivariant first Chern class $c_{1}(L_{\mu})$ gives the homomorphism
\[
\Lambda=\bigoplus_{i=1}^{n}\mathbb{Z}\varpi_{i}\rightarrow
H_T^{2}(G/B;\mathbb{Z}),
\qquad \mu\longmapsto c_{1}(L_{\mu}),
\]
where $\Lambda=\bigoplus_{i=1}^{n}\mathbb{Z}\varpi_{i}$ is the weight lattice. We identify all weights $\mu$ with $c_{1}(L_{\mu})$ in cohomology by abusing notation. 

Consider the restriction of the line bundle $L_{\mu}$ on the Peterson variety $\Pet_{G}$, which will also be denoted by $L_{\mu}$ when there is no confusion.  Harada--Horiguchi--Masuda \cite{harada2015equivariant} gave the following explicit and beautiful Borel-type presentation of the equivariant cohomology ring of the Peterson varieties in all Lie types. 

\begin{theorem}[\cite{harada2015equivariant}, Theorem 4.1] \label{thm-HHMpre}
    Suppose that rank of $G$ is $n$. The $S$-equivariant cohomology ring of the Peterson variety $\Pet_G$ has the following presentation
    \begin{equation}\label{eq-HHMpre}
H_S^*(\Pet_G; \mathbb{Q}) \cong \frac{\mathbb{Q}[\varpi_1, \ldots, \varpi_n,t]}{\left(\sum_{j=1}^n \left\langle \alpha_i, \alpha_j \right\rangle \varpi_i \varpi_j-2t\varpi_i \mid 1 \leq i \leq n \right)}.
\end{equation}
\end{theorem}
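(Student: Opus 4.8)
The presentation \eqref{eq-HHMpre} is the theorem of Harada--Horiguchi--Masuda \cite{harada2015equivariant}; here is the route I would take to prove it. Recall that $S\subset T$ is the distinguished one-dimensional subtorus acting on $\Pet_G$, with $H_S^*(\pt)\cong\mathbb{Q}[t]$, and that the $S$-stable affine paving \eqref{eq-paveP} makes $H_S^*(\Pet_G;\mathbb{Q})$ a free $\mathbb{Q}[t]$-module whose graded $\mathbb{Q}$-basis has Poincar\'e polynomial $\sum_{I\subset\Delta}q^{2|I|}=(1+q^2)^n$. Since $\Pet_G$ has no odd cohomology it is equivariantly formal, so $H_S^*(\Pet_G;\mathbb{Q})/(t)\cong H^*(\Pet_G;\mathbb{Q})$; and since $\Pet_G$ is projective, the restriction to the fixed locus
\[
\Theta\colon H_S^*(\Pet_G;\mathbb{Q})\hookrightarrow\bigoplus_{I\subset\Delta}H_S^*(\{w_IB\};\mathbb{Q})=\bigoplus_{I\subset\Delta}\mathbb{Q}[t]
\]
is injective, the $S$-fixed points of $\Pet_G$ being exactly the $T$-fixed points $w_IB$ of $G/B$ lying on $\Pet_G$ (one in each cell of \eqref{eq-paveP}). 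Write $\varpi_i$ also for $c_1(L_{\varpi_i})|_{\Pet_G}\in H_S^2(\Pet_G;\mathbb{Q})$, and let $\psi\colon\mathbb{Q}[\varpi_1,\dots,\varpi_n,t]\to H_S^*(\Pet_G;\mathbb{Q})$ be the ring map with $\varpi_i\mapsto c_1(L_{\varpi_i})|_{\Pet_G}$ and $t\mapsto t$.

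First I would show that $\psi$ is surjective. The equivariant pullback $\iota^*\colon H_T^*(G/B;\mathbb{Q})\to H_S^*(\Pet_G;\mathbb{Q})$ is surjective because the Peterson Schubert classes $p_I$, being rational multiples of the pullbacks $\iota^*(\sigma_{v_I})$, already form a $\mathbb{Q}[t]$-basis of $H_S^*(\Pet_G;\mathbb{Q})$ (dual to the homology basis of Peterson Schubert cycles from \eqref{eq-PSV}). By the Borel presentation, $H_T^*(G/B;\mathbb{Q})$ is generated as an algebra over $H_T^*(\pt)$ by the classes $c_1(L_{\varpi_i})$, and $H_T^*(\pt)\to H_S^*(\pt)=\mathbb{Q}[t]$ is onto by restriction of characters; together these give surjectivity of $\psi$.

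Next I would identify $\ker\psi$. By injectivity of $\Theta$, a polynomial $f$ lies in $\ker\psi$ if and only if $f\bigl(c_1(L_{\varpi_1})|_{w_IB},\dots,c_1(L_{\varpi_n})|_{w_IB},t\bigr)=0$ in $\mathbb{Q}[t]$ for every $I\subset\Delta$; since each restriction $c_1(L_{\varpi_j})|_{w_IB}$ is a $\mathbb{Q}$-multiple of $t$, this says exactly that $f$ vanishes on the line $L_I\subset\mathbb{A}^{n+1}$ parametrized by $t\mapsto\bigl(c_1(L_{\varpi_1})|_{w_IB},\dots,c_1(L_{\varpi_n})|_{w_IB},t\bigr)$, so $\ker\psi$ is the (radical) ideal of $\bigcup_{I\subset\Delta}L_I$. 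A computation with Cartan integers (using that $w_I$ is the longest element of $W_I$, so $w_I\alpha_i=-\alpha_i$ for $i\in I$ and $w_I$ permutes $\{\alpha_j:j\in I\}$; the standard relations between the simple roots and $\rho^{\vee}$; and the normalization of the $\varpi_i$ and of $t$ used in \cite{harada2015equivariant}, pinned down by checking ranks $1$ and $2$) identifies $L_I$ with the line $\{\varpi_j=0\ (j\notin I),\ (C_I\varpi_I)_j=2t\ (j\in I)\}$. Hence $\bigcup_{I\subset\Delta}L_I$ is precisely the zero locus of the $n$ homogeneous degree-$4$ polynomials $g_i:=\sum_{j=1}^n\langle\alpha_i,\alpha_j\rangle\varpi_i\varpi_j-2t\varpi_i=\varpi_i\bigl(\sum_{j=1}^n\langle\alpha_i,\alpha_j\rangle\varpi_j-2t\bigr)$. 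Since every Cartan submatrix $C_I$ is invertible \cite{humphreys1972}, this zero locus is a union of $2^n$ distinct lines through the origin and has codimension $n$; therefore $g_1,\dots,g_n$ form a regular sequence in the Cohen--Macaulay ring $\mathbb{Q}[\varpi_1,\dots,\varpi_n,t]$, and, being a complete intersection that is generically reduced along each of these lines, the ideal $(g_1,\dots,g_n)$ is radical and hence equals the ideal of $\bigcup_{I\subset\Delta}L_I=\ker\psi$. Thus $\psi$ descends to an isomorphism $\mathbb{Q}[\varpi_1,\dots,\varpi_n,t]/(g_1,\dots,g_n)\cong H_S^*(\Pet_G;\mathbb{Q})$, which is \eqref{eq-HHMpre}; as a consistency check, the resulting Hilbert series $(1-q^4)^n/(1-q^2)^{n+1}=(1+q^2)^n/(1-q^2)$ reproduces the Poincar\'e polynomial $(1+q^2)^n$ of the first paragraph.

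I expect the surjectivity of $\iota^*$ to be the main obstacle: unlike the rest of the argument it genuinely uses the geometry of the singular variety $\Pet_G$ (through its Peterson Schubert cycles, or equivalently its flat degeneration to the $W$-permutohedral variety \cite{Abe2020Geometry}) rather than formal properties of equivariant cohomology. The next most delicate point is the Cartan-integer computation identifying the lines $L_I$, which is sensitive to the chosen normalization and requires care because $\Pet_G$ is not smooth; the remaining commutative-algebra step is routine.
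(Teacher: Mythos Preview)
The paper does not prove this theorem at all: Theorem~\ref{thm-HHMpre} is quoted verbatim from \cite{harada2015equivariant} and used as a black box, so there is no proof in the paper to compare your proposal against.

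That said, your sketch is broadly the shape of the original Harada--Horiguchi--Masuda argument (surjectivity of the map from a polynomial ring, followed by identification of the kernel via localization to $S$-fixed points and a complete-intersection/Hilbert-series count). One point worth flagging: your surjectivity step appeals to the Peterson Schubert basis $\{p_I\}$ of Theorem~\ref{thm-PSbasis}, which in this paper is stated \emph{after} Theorem~\ref{thm-HHMpre} and comes from the later work \cite{Goldin2024positivity}; the HHM paper itself establishes surjectivity of $\iota^*$ by different, earlier means, so if you were reconstructing their proof you would want an argument that does not rely on \cite{Goldin2024positivity}. The fixed-point line computation you describe is also the genuinely delicate part in \cite{harada2015equivariant}, as you anticipate, and is carried out there with specific normalizations that differ slightly from the $\varpi_i$-presentation recorded here.
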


For the above $\mathbb{Q}$-coefficient, see their final remark in \cite{harada2015equivariant}. In the above Harada--Horiguchi--Masuda presentation \eqref{eq-HHMpre}, the variable $\varpi_{i}$ corresponds to 
the equivariant first Chern class $c_{1}(L_{\varpi_{i}})\in H_S^{2}(\Pet_{G};\mathbb{Q})$ of the line bundle $L_{\varpi_{i}}$ on $\Pet_{G}$ associated with the fundamental weight $\varpi_{i}$; see, for example, \cite[Section 6.5]{abe2023peterson}.

For $I \subset \Delta$, recall that an element $v \in W$ is called a \emph{Coxeter element} for $I$ if $v=s_{\alpha_1} \cdots s_{\alpha_k}$ for some enumeration $\alpha_1, \ldots, \alpha_k$ of $I$. Let $\iota: \Pet_{G} \hookrightarrow G / B$ denote the closed embedding from the Peterson variety to the flag variety. In \cite{Goldin2024positivity}, a basis $\left\{p_I\right\}_{I \subset \Delta}$ of $H_S^*(\Pet_{G};\mathbb{Z})$ dual to the basis $\left\{\left[\Pet_{G,I}\right]\right\}_{I \subset \Delta}$ of $H^S_*(\Pet_{G};\mathbb{Z})$ is constructed. We call $p_I$ an (integral equivariant) \emph{Peterson Schubert class}.

\begin{theorem}[\protect{\cite[Theorem 4.3 and Corollary 4.4]{Goldin2024positivity}}] \label{thm-PSbasis}
    For each $I \subset \Delta$, fix a Coxeter element $v_I$, and consider the pull-back of the equivariant Schubert class $\iota^* \sigma_{v_I} \in H_S^*(\Pet_{G};\mathbb{Q})$. Then the classes 
    \begin{equation}\label{eq-PSclass}
      \left\{\left.p_I:=\frac{\iota^* \sigma_{v_I}}{m(v_I)} \in H_S^*(\Pet_{G};\mathbb{Q}) \right\rvert\, I \subset \Delta\right\} 
    \end{equation}
    form a $H_S^*(p t)$-basis of $H_S^*(\Pet_{G};\mathbb{Z})$, which is dual to the basis $\left\{\left[\Pet_{G,I}\right]\right\}_{I \subset \Delta}$ of $H^S_*(\Pet_{G};\mathbb{Z})$ under the standard pairing between equivariant homology and equivariant cohomology, where $m\left(v_I\right)$ is the multiplicity (which is a positive integer) of the unique point $w_I$ in the intersection $\Omega_{w_I} \cap \Pet_{G,I}$.
\end{theorem}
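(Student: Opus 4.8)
The plan is to reduce the statement, via the proper pushforward along the closed embedding $\iota\colon\Pet_G\hookrightarrow G/B$, to the classical orthogonality $\langle\sigma_v,[X_w]\rangle_{G/B}=\delta_{v,w}$ between equivariant Schubert classes and Schubert cycles on $G/B$. First I would fix the ambient bookkeeping: $\Pet_G$ is projective of dimension $n$ and carries both the $S$-stable affine paving \eqref{eq-paveP} and the ``opposite'' decomposition $\Pet_G=\bigsqcup_{I\subset\Delta}(\Pet_G\cap\Omega_{w_I}^\circ)$ coming from \eqref{eq-OBruhat} together with the Proposition above; consequently $H_*^S(\Pet_G;\mathbb Z)$ and $H_S^*(\Pet_G;\mathbb Z)$ are free $\mathbb Z[t]$-modules of rank $2^n$, the classes $\{[\Pet_{G,I}]\}$ form a $\mathbb Z[t]$-basis of the former, and the evaluation pairing $H_S^*(\Pet_G;\mathbb Z)\times H_*^S(\Pet_G;\mathbb Z)\to\mathbb Z[t]$ is perfect (a standard consequence of the paving). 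I would also use the elementary facts that $\ell(v_I)=|I|$ for a Coxeter element $v_I$ of $W_I$, and that $v_I\le w_J\iff w_I\le w_J\iff I\subseteq J$. The reduction itself is the projection formula for $\iota$: for all $I,J$,
\[
\langle \iota^*\sigma_{v_I},[\Pet_{G,J}]\rangle_{\Pet_G}=\langle \sigma_{v_I},\iota_*[\Pet_{G,J}]\rangle_{G/B},
\]
where, $\iota$ being a closed embedding, $\iota_*[\Pet_{G,J}]$ is just the fundamental class in $G/B$ of the subvariety $\Pet_{G,J}=\Pet_G\cap X_{w_J}$.

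Next I would expand $\iota_*[\Pet_{G,J}]=\sum_{w\in W}b_{J,w}(t)\,[X_w]$ in the Schubert basis of $H_*^S(G/B;\mathbb Z)$ and extract the pairing. The containment $\Pet_{G,J}\subseteq X_{w_J}$ forces $b_{J,w}=0$ unless $w\le w_J$; comparing homological degrees ($[\Pet_{G,J}]$ sits in degree $2|J|$, $[X_w]$ in degree $2\ell(w)$, and $H_S^*(\mathrm{pt})$ vanishes in negative degrees) forces $b_{J,w}\in H_S^{2(\ell(w)-|J|)}(\mathrm{pt})$, hence $b_{J,w}=0$ unless $\ell(w)\ge|J|$, in which case $b_{J,w}$ is an integer multiple of $t^{\ell(w)-|J|}$. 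Substituting into the displayed identity and using $\langle\sigma_{v_I},[X_w]\rangle_{G/B}=\delta_{v_I,w}$ gives
\[
\langle \iota^*\sigma_{v_I},[\Pet_{G,J}]\rangle_{\Pet_G}=b_{J,v_I}(t),
\]
which can be nonzero only when $v_I\le w_J$ (so $I\subseteq J$) and $\ell(v_I)=|I|\ge|J|$ hold simultaneously, i.e.\ only when $I=J$. Thus the matrix pairing $\{\iota^*\sigma_{v_I}\}$ against $\{[\Pet_{G,J}]\}$ is diagonal, with $(J,J)$-entry the integer $b_{J,v_J}$.

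Finally I would identify $b_{J,v_J}$ with $m(v_J)$ and conclude. Writing $\sigma_{v_J}=\mathrm{PD}[\Omega_{v_J}]$, the first display gives $b_{J,v_J}=\deg\bigl([\Omega_{v_J}]\cdot[\Pet_{G,J}]\bigr)$, an intersection number in $G/B$. Intersecting the two pavings of $\Pet_G$, one has $\Omega_{v_J}\cap\Pet_G=\bigsqcup_{K\supseteq J}(\Pet_G\cap\Omega_{w_K}^\circ)$ and $\Pet_{G,J}=\bigsqcup_{L\subseteq J}\Pet_{G,L}^\circ$, while $\Omega_{w_K}^\circ\cap X_{w_L}^\circ\ne\emptyset$ only if $K\subseteq L$; these together force $K=L=J$, so $\Omega_{v_J}\cap\Pet_{G,J}=\Pet_G\cap\Omega_{w_J}^\circ\cap X_{w_J}^\circ=\{w_JB\}$. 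Since $\dim\Omega_{v_J}+\dim\Pet_{G,J}=\dim G/B$, this intersection is proper, so $b_{J,v_J}$ equals the local intersection multiplicity of $\Omega_{v_J}$ and $\Pet_{G,J}$ at $w_JB$ --- a positive integer, which is the quantity $m(v_J)$ of the statement. Hence $\langle\iota^*\sigma_{v_I}/m(v_I),[\Pet_{G,J}]\rangle=\delta_{I,J}$ for all $I,J$; since $\{[\Pet_{G,J}]\}$ is a $\mathbb Z[t]$-basis of $H_*^S(\Pet_G;\mathbb Z)$ and the pairing is perfect, $p_I:=\iota^*\sigma_{v_I}/m(v_I)$ --- a priori only a class in $H_S^*(\Pet_G;\mathbb Q)$, but $\mathbb Z[t]$-valued against an integral basis, hence integral --- is exactly the dual $\mathbb Z[t]$-basis of $H_S^*(\Pet_G;\mathbb Z)$, and in particular is independent of the chosen Coxeter elements. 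The step I expect to be the main obstacle is this last paragraph: verifying that the ascending and descending affine pavings of $\Pet_G$ are ``opposite'' in the precise sense needed, so that the intersection collapses to the single $S$-fixed point $w_JB$; confirming that the intersection there is proper with multiplicity precisely the positive integer named $m(v_J)$; and --- if one wants to match the statement's phrasing literally --- reconciling this with the multiplicity of $w_I$ in $\Omega_{w_I}\cap\Pet_{G,I}$.
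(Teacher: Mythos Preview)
The paper does not prove this theorem at all: it is quoted verbatim from \cite[Theorem 4.3 and Corollary 4.4]{Goldin2024positivity} and used as a black box, so there is no ``paper's own proof'' to compare your attempt against.

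That said, your sketch is broadly the right shape for how such a result is established, and is close in spirit to the argument in the cited reference. The reduction via the projection formula and the triangularity argument (support in $X_{w_J}$ plus the degree constraint forcing $I=J$) are correct and are the heart of the matter. The point you flag as the main obstacle is indeed the genuine one: the statement defines $m(v_I)$ as the multiplicity of the point $w_I$ in the \emph{non-proper} intersection $\Omega_{w_I}\cap\Pet_{G,I}$ inside $G/B$ (note $\dim\Omega_{w_I}+\dim\Pet_{G,I}=\ell(w_0)-\ell(w_I)+|I|<\ell(w_0)$ in general), whereas what naturally drops out of your computation is the proper intersection number $\deg\bigl([\Omega_{v_I}]\cdot[\Pet_{G,I}]\bigr)$. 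Reconciling the two requires the extra input, also supplied in \cite{Goldin2024positivity}, that the multiplicity of $w_I$ in the scheme $\Omega_{w_I}\cap\Pet_{G,I}$ coincides with the local intersection multiplicity of $\Omega_{v_I}$ and $\Pet_{G,I}$ at $w_IB$; this is not automatic and is where the actual geometric work lies. Without that identification your argument yields a dual basis $\{p_I\}$ with the correct formal properties, but does not literally verify that the normalizing constant equals the quantity $m(v_I)$ as phrased in the statement.
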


It follows that while the class $\iota^*\left(\sigma_{v_I}\right)$ depends on the choice of $v_I$, the class $p_I$ is independent of this choice. In \cite[Remark 7.7]{Goldin2024positivity}, a type-independent formula for the intersection multiplicity $m\left(v_I\right)$ was conjectured, which was proved in \cite{goldin2021equivariant}.

\begin{theorem}[\protect{Intersection multiplicity formula, \cite[Theorem 1.3]{goldin2021equivariant}}]
Suppose I is a connected Dynkin diagram. Let $v_I$ be a Coxeter element of $I$, and let $R\left(v_I\right)$ denote the number of reduced expressions for $v_I$. We have
\begin{equation} \label{eq-IMF}
m\left(v_I\right)=\frac{R\left(v_I\right)\left|W_I\right|}{|I|!\operatorname{det}\left(C_I\right)},
\end{equation}
where $W_I$ is the Weyl group determined by $I$, and $C_I$ is the Cartan matrix determined by $I$.
\end{theorem}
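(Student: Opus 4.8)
The plan is to reinterpret $m(v_I)$ as an honest cohomological intersection number (carrying no $t$-term), reduce it to the case $I=\Delta$, and then evaluate it -- either by equivariant localization, made rigorous via the toric degeneration of $\Pet_G$, or by a direct computation inside the Harada--Horiguchi--Masuda ring \eqref{eq-HHMpre}.

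First I would set up and reduce. By Theorem~\ref{thm-PSbasis} the class $p_I=\iota^*\sigma_{v_I}/m(v_I)$ is dual to $[\Pet_{G,I}]$, so $m(v_I)=\bigl\langle\iota^*\sigma_{v_I},[\Pet_{G,I}]\bigr\rangle=\int_{\Pet_{G,I}}\sigma_{v_I}$, a positive integer (geometrically the intersection concentrates at $w_I$, by the paving \eqref{eq-paveP} together with the fact that $\Pet_G\cap\Omega_w^\circ\neq\emptyset$ only for $w=w_J$). Now $X_{w_I}=P_I/B$ and, by a standard fact about Peterson Schubert varieties, $\Pet_{G,I}=\Pet_G\cap X_{w_I}$ is isomorphic to the Peterson variety $\Pet_{G_I}$ of the semisimple group $G_I$ with simple roots $I$, while $\sigma_{v_I}$ restricts along $X_{w_I}\cong G_I/B_I$ to the Schubert class of $v_I\in W_I$; by the projection formula $m(v_I)$ equals the corresponding multiplicity for $G_I$. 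As $I$ is connected this reduces everything to $I=\Delta$, $G$ simple: writing $c:=v_\Delta$ for a Coxeter element of $W$ and $n=\dim\Pet_G=|\Delta|$, the assertion becomes
\[
m(c)=\int_{\Pet_G}\iota^*\sigma_c=\frac{R(c)\,|W|}{n!\,\det(C_\Delta)}.
\]

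I would compute the right-hand side by two parallel routes. \emph{Localization.} The $S$-fixed points of $\Pet_G$ are $\{w_J\mid J\subset\Delta\}$, so $m(c)=\sum_J\iota^*\sigma_c|_{w_J}\big/e_S(w_J)$. Each numerator $(\sigma_c|_{w_J})\big|_S$ is given by the AJS--Billey subword formula as a sum of products of $n$ positive roots over reduced subwords of a fixed reduced word of $w_J$ spelling $c$; under $T\rightsquigarrow S$ a positive root $\beta$ specializes to a fixed scalar times $\operatorname{ht}(\beta)\,t$, so each numerator is $t^{\,n}$ times an explicit height-weighted subword count. Since $\Pet_G$ is singular -- indeed non-normal -- I would make the Euler classes $e_S(w_J)$ legitimate by localizing instead on the flat degeneration of $\Pet_G$ to the \emph{smooth} $W$-permutohedral toric variety, whose fan is the Coxeter arrangement fan: there the tangent weights at a fixed point are read off from its maximal cone, and the sum collapses to a finite computation over the Weyl chambers. \emph{In the HHM ring.} Working non-equivariantly, $\iota^*\sigma_{s_i}=\varpi_i$ and $\iota^*$ is a surjective ring map onto the $t=0$ specialization of \eqref{eq-HHMpre}; so one expresses $\sigma_c$ through a Bott--Samelson or divided-difference representative, pushes it into that ring, and integrates. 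The only global input is the single top intersection number $\int_{\Pet_G}\varpi_1\cdots\varpi_n=|W|/\det(C_\Delta)$ (the all-ones mixed $\Phi$-Eulerian number), which I would establish separately from the toric model; alternatively the equivariant Monk recursion of \cite{drellich2015monk} furnishes $\iota^*\sigma_{v_I}$ inductively.

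The main obstacle is the closed-form evaluation, uniformly in the Lie type. In the localization route this means controlling the singular local geometry of $\Pet_G$ at $w_\Delta$ -- equivalently, pinning down the flat limit of $\iota^*\sigma_c$ on the permutohedral variety -- and then summing the resulting lattice series, where I expect Perron--Frobenius and matrix-inversion manipulations like those used elsewhere in this paper to reappear. In the algebraic route, since $H^{2n}(\Pet_G;\mathbb{Q})$ is one-dimensional and generated by $\varpi_1\cdots\varpi_n$, the task is to find the single rational coefficient expressing $\iota^*\sigma_c$ there and show it equals $R(c)/n!$ -- the proportion, over the $n!$ orderings of $\Delta$, of words $s_{i_1}\cdots s_{i_n}$ that are reduced expressions for this $c$ -- after which $\int_{\Pet_G}\varpi_1\cdots\varpi_n=|W|/\det(C_\Delta)$ gives the formula. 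A final check is that no spurious $t$-corrections survive, which is automatic because $m(v_I)$ is a genuine (non-equivariant) intersection number.
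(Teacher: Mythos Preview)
The paper does not prove this statement; it is quoted verbatim as a preliminary result from \cite[Theorem~1.3]{goldin2021equivariant} and then used (together with the equivariant Giambelli formula, also imported from that paper) to derive the Peterson Schubert monomials in Theorem~\ref{thm-PSM}. So there is no in-paper proof to compare your proposal against.

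On the merits of your plan, a few comments. Your reduction to $I=\Delta$ via $\Pet_{G,I}\cong\Pet_{G_I}$ is sound and matches what the paper alludes to in the remark following Theorem~\ref{thm-PSM}. The more serious issue is that your ``algebraic route'' is essentially circular relative to the source you are trying to reprove: the step ``show that the coefficient of $\varpi_1\cdots\varpi_n$ in $\iota^*\sigma_c$ equals $R(c)/n!$'' \emph{is} the top-degree case of the equivariant Giambelli formula \eqref{eq-Giambelli}, which is \cite[Theorem~1.2]{goldin2021equivariant}---the companion result in the very paper whose Theorem~1.3 you are attempting to establish. In \cite{goldin2021equivariant} the two theorems are proved together, so invoking one to get the other does not constitute an independent argument. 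If you want a genuinely self-contained proof you must either (a) prove the Giambelli identity by other means (e.g.\ directly from the Drellich Monk rule, as you hint, though carrying this out uniformly is nontrivial), or (b) make the localization route rigorous, which as you note requires controlling the degeneration of $\iota^*\sigma_c$ to the permutohedral toric variety and then evaluating a Weyl-chamber sum---neither of which you have actually done. Your proposal correctly identifies the ingredients and the obstacles, but as written it is a roadmap rather than a proof, and its shortest branch leans on a result logically equivalent to the target.
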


Next, we recall the equivariant Giambelli formula for the Peterson variety in \cite{drellich2015monk} and \cite{goldin2021equivariant}. It is well known that the equivariant Schubert classes $\sigma_{s_i}, i\in\Delta$, generate $H_T^*(G / B;\mathbb{Q})$ as a $H_T^*(p t)$-algebra. Since the restriction map $H_T^*\left(G/B ; \mathbb{Q}\right) \rightarrow H_S^*\left(\Pet_{G} ; \mathbb{Q}\right)$ is surjective \cite{harada2015equivariant}, it is easy to see that each $\iota^* \sigma_{v_I}$ can be expressed as a polynomial with $\mathbb{Q}[t]$-coefficients in $\iota^* \sigma_{s_i}, i\in\Delta$. Drellich  \cite{drellich2015monk} gave a very simple equivariant Giambelli formula for this polynomial for a particular choice of $v_I$, while Goldin--Singh \cite{goldin2021equivariant} gave a type-independent proof to show that this formula works for all Coxeter elements.

\begin{theorem}[\protect{Equivariant Giambelli formula for Peterson varieties, \cite[Theorem 1.2]{goldin2021equivariant}}]
Suppose I is a connected Dynkin diagram. Let $v_I$ be a Coxeter element for $I$, let $R\left(v_I\right)$ be the number of reduced words for $v_I$, then we have
\begin{equation}\label{eq-Giambelli}
   \iota^* \sigma_{v_I}=\frac{R\left(v_I\right)}{|I|!}\prod_{i \in I} \iota^* \sigma_{s_i}.
\end{equation}
\end{theorem}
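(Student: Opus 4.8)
The plan is to prove \eqref{eq-Giambelli} by restricting both sides to the $S$-fixed points of $\Pet_G$, which reduces the theorem to a type-uniform counting identity for reduced words in the Weyl group. By the $S$-stable affine paving \eqref{eq-paveP}, $\Pet_G$ is equivariantly formal and its $S$-fixed locus is the finite set $\{w_J\mid J\subseteq\Delta\}$ --- one point in the closure of each Peterson cell; hence the total restriction map $H_S^*(\Pet_G;\mathbb{Q})\hookrightarrow\bigoplus_{J\subseteq\Delta}H_S^*(\pt)=\bigoplus_J\mathbb{Q}[t]$ is injective, and since restriction to a point is a ring homomorphism it suffices to verify \eqref{eq-Giambelli} after restricting to each $w_J$. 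Here $(\iota^*\sigma_w)|_{w_J}$ is the image of the $T$-equivariant restriction $\sigma_w|_{w_J}\in H_T^*(\pt)=\Sym\Lambda$ under the ring map $\Sym\Lambda\to H_S^*(\pt)=\mathbb{Z}[t]$ induced by $S\hookrightarrow T$, which sends every simple root $\alpha_k$ to $t$.

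Next I would compute both sides at $w_J$. The divisor restriction formula gives $\sigma_{s_i}|_{w_J}=\varpi_i-w_J\varpi_i$, which vanishes if $i\notin J$ (since $W_J$ fixes $\varpi_i$) and otherwise is a nonzero, non-negative integer combination $\sum_{k\in J}d^J_{i,k}\alpha_k$ of simple roots; thus $(\iota^*\sigma_{s_i})|_{w_J}=e^J_i\,t$ with $e^J_i:=\sum_{k\in J}d^J_{i,k}>0$ for $i\in J$ and $e^J_i=0$ otherwise. So the right-hand side of \eqref{eq-Giambelli} restricts to $\tfrac{R(v_I)}{|I|!}\bigl(\prod_{i\in I}e^J_i\bigr)t^{|I|}$, which is $0$ unless $I\subseteq J$. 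For the left-hand side, Billey's formula (applied to a fixed reduced word for $w_J$) expresses $\sigma_{v_I}|_{w_J}$ as a sum, over reduced subwords spelling $v_I$, of products of $|I|$ positive roots; it vanishes unless $v_I\le w_J$, i.e. unless $I\subseteq J$, and after $\alpha_k\mapsto t$ it equals $N^J(v_I)\,t^{|I|}$ for a non-negative integer $N^J(v_I)$ (a height-weighted count of such subwords). Hence \eqref{eq-Giambelli} is equivalent to the identity
\[
|I|!\cdot N^J(v_I)\;=\;R(v_I)\prod_{i\in I}e^J_i\qquad\text{for all connected }I\subseteq J\subseteq\Delta .
\]

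The last step, which I expect to be the real difficulty, is to prove this identity uniformly. Since $\bigl(\prod_{i\in I}e^J_i\bigr)t^{|I|}$ is the $w_J$-restriction of $\iota^*\bigl(\prod_{i\in I}\sigma_{s_i}\bigr)$, I would expand $\prod_{i\in I}\sigma_{s_i}$ in the Schubert basis of $H_T^*(G/B)$ by iterating Monk's rule --- in top cohomological degree only Schubert classes of Coxeter elements of $I$ occur --- and then track how the lower-degree equivariant correction terms contribute after restriction to $w_J$ and specialization $\alpha_k\mapsto t$, running the computation inductively on $|I|$ by peeling off a leaf $j$ of the connected diagram $I$ (so that $v_I=s_j v_{I\setminus\{j\}}$ up to commutations) and invoking Drellich's equivariant Monk rule for $\Pet_G$ together with \eqref{eq-IMF}. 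The factor $\tfrac{R(v_I)}{|I|!}$ should emerge as a symmetrization: $|I|!$ counts the orderings of $I$, $R(v_I)$ counts those spelling the chosen Coxeter element, and the collapse of this symmetrization hinges on $\iota^*\sigma_v$ depending on the Coxeter word $v$ only through $R(v)$. The main obstacle is exactly that last point --- that the identity holds for \emph{every} Coxeter element of $I$, not merely a distinguished one. Drellich \cite{drellich2015monk} verified \eqref{eq-Giambelli} for one preferred $v_I$ by an explicit GKM computation; the extension to all Coxeter elements is the content of Goldin--Singh \cite{goldin2021equivariant}, whose type-independent argument uses the $W$-action on $H_S^*(\Pet_G)$ and the affine-paving filtration, and I would expect to follow that route.

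As a consistency cross-check --- though in the paper's logical order this deduction runs backwards --- once the Harada--Horiguchi--Masuda presentation \eqref{eq-HHMpre} and the monomial formula for $p_I$ are in hand, \eqref{eq-Giambelli} is immediate: $\iota^*\sigma_{v_I}=m(v_I)\,p_I$, and taking $|I|=1$ (where $m(s_i)=1$ by \eqref{eq-IMF}) yields $\iota^*\sigma_{s_i}=p_{\{i\}}=\varpi_i$, so \eqref{eq-Giambelli} reduces to the numerical equality $m(v_I)\cdot\tfrac{\det(C_I)}{|W_I|}=\tfrac{R(v_I)}{|I|!}$, which is precisely the intersection multiplicity formula \eqref{eq-IMF}.
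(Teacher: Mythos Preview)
The paper does not give its own proof of this statement: it is quoted verbatim as a preliminary result from Goldin--Singh \cite{goldin2021equivariant} (with the special case of a preferred Coxeter element due to Drellich \cite{drellich2015monk}), and is then used as an input in the proof of Theorem~\ref{thm-PSM}. So there is no ``paper's own proof'' to compare against.

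As for your proposal itself: the reduction by equivariant formality and localization at the $S$-fixed points $\{w_J\}$, together with Billey's formula and the divisor restriction $\sigma_{s_i}|_{w_J}=\varpi_i-w_J\varpi_i$, is a sound way to reformulate \eqref{eq-Giambelli} as a family of numerical identities. But you correctly identify that the substantive step --- showing the identity for \emph{every} Coxeter element $v_I$, not just a distinguished one --- is exactly what \cite{goldin2021equivariant} supplies, and you do not actually carry it out; you only say you ``would expect to follow that route.'' In that sense the proposal is a plausible outline rather than a proof. Your final paragraph is, as you yourself note, logically circular within this paper: Theorem~\ref{thm-PSM} is deduced \emph{from} \eqref{eq-Giambelli} and \eqref{eq-IMF}, so it cannot be invoked to recover \eqref{eq-Giambelli}.
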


\section{Structure constants of Peterson Schubert calculus} \label{sec-Structure}
For $I\subset\Delta$, we define
\begin{equation} \label{eq-monomial}
    \varpi_{I}:=\prod_{i\in I}\varpi_{i}\in H_S^{2|I|}(\Pet_{G};\mathbb{Q}).
\end{equation}

Firstly, we identify the Peterson Schubert classes with our \emph{Peterson Schubert monomials} under the Harada--Horiguchi--Masuda presentation \eqref{eq-HHMpre}. 

\begin{theorem}[Peterson Schubert monomials] \label{thm-PSM}
Under the isomorphism \eqref{eq-HHMpre}, the Peterson Schubert class $p_I$, $I \subset \Delta$, defined in \eqref{eq-PSclass}, is represented by the monomial 
\begin{equation} \label{eq-PSM}
\frac{\operatorname{det}\left(C_I\right)}{\left|W_I\right|}\varpi_{I}, 
\end{equation}
where $\operatorname{det}\left(C_I\right)$ is the determinant of the Cartan sub-matrix $C_I$ determined by $I\subset\Delta$, and $\left|W_I\right|$ is the order of the parabolic subgroup $W_I$ of the Weyl group $W$ determined by $I\subset\Delta$.
\end{theorem}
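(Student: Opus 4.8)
The plan is to read off the monomial directly from the two formulas recalled just above: the equivariant Giambelli formula \eqref{eq-Giambelli} and the intersection multiplicity formula \eqref{eq-IMF}. The only preparatory point is to identify the degree-two generators, so I would first check that, under the isomorphism \eqref{eq-HHMpre}, the generator $\varpi_i$ is precisely the pulled-back Schubert divisor $\iota^*\sigma_{s_i}$. In $H_T^*(G/B;\mathbb{Q})$ the classes $\sigma_{s_i}$ and $c_1(L_{\varpi_i})$ have restrictions to each torus-fixed point $wB$ differing by the $w$-independent element $\varpi_i\in H_T^*(pt)$ (compute $\sigma_{s_i}|_{wB}=\varpi_i-w\varpi_i$ against the $T$-weight $-w\varpi_i$ of $L_{\varpi_i}$ at $wB$), hence by injectivity of restriction to the fixed points they differ by the constant class $\varpi_i$; after normalizing the linearization of $L_{\varpi_i}$ at the base point $eB$ — the normalization built into the surjection $H_T^*(G/B;\mathbb{Q})\twoheadrightarrow H_S^*(\Pet_G;\mathbb{Q})$ of \cite{harada2015equivariant} — this constant is absorbed and $\varpi_i=\iota^*\sigma_{s_i}$.

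Granting this, the case where $I$ has connected Dynkin diagram is a one-line cancellation. Fixing a Coxeter element $v_I$ and substituting \eqref{eq-Giambelli} and \eqref{eq-IMF} into the definition \eqref{eq-PSclass} of $p_I$, the factor $R(v_I)/|I|!$ appears in numerator and denominator and cancels:
\[
p_I=\frac{\iota^*\sigma_{v_I}}{m(v_I)}=\Bigl(\tfrac{R(v_I)}{|I|!}\prod_{i\in I}\iota^*\sigma_{s_i}\Bigr)\cdot\frac{|I|!\,\det(C_I)}{R(v_I)\,|W_I|}=\frac{\det(C_I)}{|W_I|}\prod_{i\in I}\iota^*\sigma_{s_i},
\]
which by the previous paragraph is $\frac{\det(C_I)}{|W_I|}\varpi_I$ in \eqref{eq-HHMpre}.

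For general $I$ I would reduce to connected pieces by multiplicativity over the connected components $I=\bigsqcup_j I_j$. The right-hand side factors because $\det(C_I)=\prod_j\det(C_{I_j})$ (block-diagonal Cartan matrix), $|W_I|=\prod_j|W_{I_j}|$, and $\varpi_I=\prod_j\varpi_{I_j}$; on the left, $v_I=\prod_j v_{I_j}$ is a commuting product of Coxeter elements supported on mutually non-adjacent sub-diagrams, so $\iota^*\sigma_{v_I}=\prod_j\iota^*\sigma_{v_{I_j}}$ and $m(v_I)=\prod_j m(v_{I_j})$ (equivalently, the reduced words of $v_I$ are the shuffles of those of the $v_{I_j}$, so that \eqref{eq-Giambelli} and \eqref{eq-IMF} applied to the connected pieces assemble to the disconnected case). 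Hence $p_I=\prod_j p_{I_j}$ and the connected case finishes the proof. I expect the first paragraph — pinning down that $\varpi_i$ is literally $\iota^*\sigma_{s_i}$, not merely a scalar multiple of it or a translate by an element of $H_S^*(pt)$ — to be the only genuinely non-bookkeeping step; one must also take a little care in the last paragraph to apply the connected-diagram formulas \eqref{eq-Giambelli}, \eqref{eq-IMF} only to the components $I_j$.
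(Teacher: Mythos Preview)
Your proof is correct and follows essentially the same route as the paper: identify $\iota^*\sigma_{s_i}$ with $\varpi_i$, substitute \eqref{eq-Giambelli} and \eqref{eq-IMF} into \eqref{eq-PSclass} for connected $I$, and then reduce the general case to connected components by multiplicativity. The only difference is that where you argue the identification $\iota^*\sigma_{s_i}=\varpi_i$ via localization and a normalization remark, the paper simply cites it as the well-known equality $\sigma_{s_i}=c_1(L_{\varpi_i})$ together with naturality of the Chern class.
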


\begin{proof}
   As is well-known, the equivariant Schubert class $\sigma_{s_i}$ is equal to $\varpi_{i}$, the equivariant first Chern class of the line bundle $L_{\varpi_{i}}$ on the flag variety $G/B$; see \cite{bernstein1973schubert} and \cite{harada2015equivariant}. By the naturality of the Chern class, we have 
   \begin{equation} \label{eq-S=C}
      \iota^* \sigma_{s_i}=\varpi_i \quad \text { in } H_S^{2}(\Pet_{G};\mathbb{Q}). 
   \end{equation}
   Suppose first that $I$ is a connected Dynkin diagram, then by \eqref{eq-PSclass}, \eqref{eq-IMF}, \eqref{eq-Giambelli}, \eqref{eq-monomial}, and \eqref{eq-S=C}, we have
 
       \begin{align*}
p_I&=\frac{\iota^* \sigma_{v_I}}{m\left(v_I\right)}\\
&=\frac{|I|!\operatorname{det}\left(C_I\right)}{R\left(v_I\right)\left|W_I\right|}\iota^* \sigma_{v_I}\\
&=\frac{|I|!\operatorname{det}\left(C_I\right)}{R\left(v_I\right)\left|W_I\right|}\frac{R\left(v_I\right)}{|I|!} \prod_{i \in I} \iota^* \sigma_{s_i}\\
&=\frac{\operatorname{det}\left(C_I\right)}{\left|W_I\right|}\prod_{i \in I}\varpi_i\\
&=\frac{\operatorname{det}\left(C_I\right)}{\left|W_I\right|} \varpi_I.
       \end{align*}

For a general Dynkin diagram $I$, notice that $\sigma_{v_I}$, $\operatorname{det}\left(C_I\right)$, $W_I$, and $R\left(v_I\right)$ all factor multiplicatively with respect to the decomposition of connected components of $I$, hence the conclusion follows from the connected case. 
\end{proof}

\begin{remark}
    By letting $t=0$, the above arguments also give a direct proof of \cite[Theorem 1.1.(3)]{Horiguchi2024mixed} without going through the mixed $\Phi$-Eulerian numbers.
\end{remark}

\begin{remark}
    Actually, by the Whitney sum formula, the monomial $\varpi_I$ expresses the equivariant Euler class $e\left(V_I\right)$ of the vector bundle 
    $$
V_I:=\bigoplus_{i \in I} L_{\varpi_i}
$$
on the Peterson variety $\Pet_{G}$. This vector bundle admits a well-defined section; see \cite[Section 3.2]{abe2024geometry} and \cite[Section 5.2]{abe2023peterson}. The zero locus of this section is exactly $\Omega_{I}:=\Pet_{G}\cap\Omega_{w_{I}}$ defined in \eqref{eq-PSV}; see \cite[Proposition 3.12]{abe2024geometry} and \cite[Section 5.4]{abe2023peterson}. Since $\Pet_{G,I} \cap \Omega_I=\left\{w_I\right\}$, it is not hard to see that $\left\{\varpi_I\right\}_{I \subset \Delta}$ is dual to the basis $\left\{\left[\Pet_{G,I}\right]\right\}_{I \subset \Delta}$ of $H^S_*(\Pet_{G};\mathbb{Z})$ up to a constant. This constant can be computed via integration over the Peterson Schubert variety $\Pet_{G,I}$, which is actually a product of smaller Peterson varieties; see \cite[Section 4.2] {abe2024geometry} for the computations in type $A$. This would give a different proof of the above theorem.
\end{remark}

Let $d_{I,J}^K$ denote the structure constant of $\{\varpi_{I}\}_{I\subset\Delta}$,
determined by 
$$\varpi_I \varpi_J=\sum_{\substack{K\subset \Delta\\}} d^{K}_{I,J}\varpi_K,$$
By Theorem \ref{thm-PSM}, we have the following immediate corollary.

\begin{corollary}\label{cor-relation}
    The relation between the structure constants $c_{I,J}^K$ and $d_{I,J}^K$ is
\[
c_{I,J}^K = \frac{\det(C_I) \det(C_J) |W_K|}{|W_I| |W_J| \det(C_K)} d_{I,J}^K.
\]

\end{corollary}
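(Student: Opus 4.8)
The plan is to reduce everything to the identity in Theorem \ref{thm-PSM}, which expresses each Peterson Schubert class $p_I$ as a scalar multiple of the monomial $\varpi_I$ under the Harada--Horiguchi--Masuda isomorphism \eqref{eq-HHMpre}. Write $\lambda_I := \det(C_I)/|W_I|$, so that $p_I = \lambda_I \varpi_I$ for every $I \subset \Delta$. The strategy is then purely formal: substitute this relation into both sides of the defining equation \eqref{eq-def of SC} for the $c_{I,J}^K$, substitute it also into the defining equation for the $d_{I,J}^K$, and compare coefficients against the monomial basis $\{\varpi_K\}_{K\subset\Delta}$, using that this is indeed a basis (which follows from Theorem \ref{thm-PSM} together with Theorem \ref{thm-PSbasis}, since the $p_I$ form a basis and each $\lambda_I \neq 0$).

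Concretely, I would first expand
\[
p_I \cdot p_J = \lambda_I \lambda_J\, \varpi_I \varpi_J = \lambda_I \lambda_J \sum_{K \subset \Delta} d_{I,J}^K\, \varpi_K = \lambda_I \lambda_J \sum_{K \subset \Delta} \frac{d_{I,J}^K}{\lambda_K}\, p_K,
\]
where the last step uses $\varpi_K = \lambda_K^{-1} p_K$. On the other hand, by \eqref{eq-def of SC}, $p_I \cdot p_J = \sum_{K} c_{I,J}^K\, p_K$. Since $\{p_K\}_{K \subset \Delta}$ is a basis of $H_S^*(\Pet_G;\mathbb{Q})$ over $H_S^*(pt)$, comparing coefficients gives
\[
c_{I,J}^K = \frac{\lambda_I \lambda_J}{\lambda_K}\, d_{I,J}^K = \frac{\det(C_I)\det(C_J)|W_K|}{|W_I||W_J|\det(C_K)}\, d_{I,J}^K,
\]
which is the claimed formula.

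There is essentially no obstacle here; this is a bookkeeping argument once Theorem \ref{thm-PSM} is in hand. The only point that warrants a sentence is the justification that one may legitimately compare coefficients: one should note that $\lambda_K = \det(C_K)/|W_K|$ is a nonzero rational number for every $K$ (both $\det(C_K)$ and $|W_K|$ are positive integers), so the change of basis between $\{\varpi_K\}$ and $\{p_K\}$ is invertible over $\mathbb{Q}[t]$, and hence the structure constants correspond under this rescaling exactly as computed. I would also remark that the formula is stated over $\mathbb{Q}$-coefficients, consistent with the presentation \eqref{eq-HHMpre}, even though the $c_{I,J}^K$ are known a priori to lie in $\mathbb{Z}[t]$.
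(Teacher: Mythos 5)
Your proposal is correct and is exactly the argument the paper intends: the corollary is stated as an immediate consequence of Theorem \ref{thm-PSM}, obtained by substituting $p_I = \bigl(\det(C_I)/|W_I|\bigr)\varpi_I$ into both defining equations and comparing coefficients in the basis $\{p_K\}$. Your extra remark that the rescaling factors are nonzero, so the comparison of coefficients is legitimate, is the right point to make explicit.
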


For any $K\subset\Delta$, let $A_K:=2E-C_K$, known as the \emph{Coxeter adjacency matrix} in the literature, where $E$ is the identity matrix. Let $B_K:=\frac{1}{2}A_K$, and $B_K^{\widehat{s}}$ be $B_K$ with the entries in the row indexed by $s$ being zeros. We have the following theorem.

\begin{theorem}\label{thm-main} Suppose $I=\{i_1, ..., i_l\} \subset\Delta$. Then the matrix $(d^K_{I, J})_{J, K}$ of structure constants is equal to the product of matrices $(d^{K}_{i_1, J})_{J, K}\cdots (d^{K}_{i_l, J})_{J, K}$, where 
\begin{equation}\label{eq-def of matrix}
d^{K}_{i, J}=\begin{cases}
    1, &\text{ if } K=J \bigsqcup \{i\},\\
    b^{K, \widehat{s}}_{i, s}, &\text{ if } i\in J, |K|=|J|+1, \text{ and } K\setminus J=\{s\},\\
    2t\sum_{k\in K}[C_K^{-1}]_{i,k}, &\text{ if } i\in K \text{ and } K=J, \\
     0, &\text{ otherwise }.
\end{cases}
\end{equation}
Here $b^{K, \widehat{s}}_{i, s}$ denotes the entry in the row indexed by $i$ and the column indexed by $s$ of
$B_K^{\widehat{s}}\left(E-B_K^{\widehat{s}}\right)^{-1}$ and $[C_K^{-1}]_{i,s}$ denotes the $(i,s)$-entry of $C_K^{-1}$.
\end{theorem}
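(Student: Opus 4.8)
The plan is to work entirely inside the Harada--Horiguchi--Masuda presentation \eqref{eq-HHMpre} and compute the multiplication-by-$\varpi_i$ operator on the monomial pseudo-basis $\{\varpi_J\}_{J\subset\Delta}$. Since $\varpi_I\varpi_J = \varpi_{i_1}\cdots\varpi_{i_l}\varpi_J$, multiplication by $\varpi_I$ is the composition of the operators $M_{i_k}\colon \varpi_J\mapsto \varpi_{i_k}\varpi_J$, so the matrix $(d^K_{I,J})_{J,K}$ factors as the asserted product $(d^K_{i_1,J})_{J,K}\cdots(d^K_{i_l,J})_{J,K}$ by associativity; this reduces everything to identifying the single-generator structure constants $d^K_{i,J}$, i.e. expanding $\varpi_i\varpi_J$ in the monomial basis. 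The three cases of \eqref{eq-def of matrix} correspond to: $i\notin J$ (then $\varpi_i\varpi_J=\varpi_{J\sqcup\{i\}}$ already a basis monomial, giving coefficient $1$); and $i\in J$, where $\varpi_i^2$ appears and must be rewritten using the HHM relations to lower the ``squared'' variable.

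The heart of the argument is the case $i\in J$. First I would establish a normal form: a $\mathbb{Q}[t]$-linear basis of $H_S^*(\Pet_G;\mathbb{Q})$ is given by the squarefree monomials $\{\varpi_K\}_{K\subset\Delta}$ — this follows because the HHM relations $\sum_j\langle\alpha_i,\alpha_j\rangle\varpi_i\varpi_j = 2t\varpi_i$ let one express $\varpi_i^2$ in terms of $\varpi_i\varpi_j$ ($j\ne i$) and $t\varpi_i$, and a degree/dimension count against the known rank $2^n$ of $H_S^*(\Pet_G)$ shows these monomials are a basis. Concretely, when $K=J$ with $i\in J$, the coefficient of $\varpi_J$ in $\varpi_i\varpi_J$ comes from the $t$-term, and one reads off $2t\sum_{k}[C_K^{-1}]_{i,k}$ by inverting the relevant sub-Cartan system; when $|K|=|J|+1$ with $K\setminus J=\{s\}\ne\{i\}$, the coefficient records how reducing $\varpi_i^2$ within the sub-root-system on $K$ produces $\varpi_s$, which should be exactly the entry $b^{K,\widehat s}_{i,s}$ of the geometric-series matrix $B_K^{\widehat s}(E-B_K^{\widehat s})^{-1}$. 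The reason a geometric series appears: reducing $\varpi_i^2$ introduces cross terms $\varpi_i\varpi_j$, some of which are still not squarefree relative to the ambient index set and need further reduction, and iterating this rewriting on the sub-root-system indexed by $K$ is precisely a Neumann series $\sum_{m\ge0}(B_K^{\widehat s})^m = (E-B_K^{\widehat s})^{-1}$, with the ``hat'' bookkeeping reflecting that once a variable is eliminated its row drops out.

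More precisely, I would restrict attention to a fixed target set $K$ with $J\subset K$, $i\in J$, so that in the quotient ring ``supported on $K$'' (i.e. setting $\varpi_\ell=0$ for $\ell\notin K$) we only care about how $\varpi_i\varpi_J$ acquires a $\varpi_K$-component. Using the HHM relation for the index $i$ and the sub-Cartan matrix $C_K$, I would write $\varpi_i\cdot(\text{anything}) $ modulo squarefree-on-$K$ monomials, track the recursion, and recognize the resulting linear system as multiplication by $B_K(E-\cdots)^{-1}$-type matrices; the boundary contribution where the process terminates at a new index $s$ gives the $(i,s)$ entry. For the purely-$t$ (diagonal) case $K=J$, the same relations solved for the $t$-coefficient directly yield $2t\cdot(C_K^{-1}\mathbf 1)_i = 2t\sum_k[C_K^{-1}]_{i,k}$.

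The main obstacle I anticipate is making the rewriting recursion rigorous and showing it terminates with the stated closed form — in particular, proving that the infinite formal sum arising from repeatedly eliminating non-squarefree terms actually converges/stabilizes to $(E-B_K^{\widehat s})^{-1}$ and that the ``hat'' (zeroing the $s$-row) is the correct bookkeeping. The paper signals that this is exactly where the Perron--Frobenius theorem (Lemma~\ref{lem: convergence of matrix series}) and the Sherman--Morrison formula (Lemma~\ref{lem: manifestly positive formula}) enter: Perron--Frobenius controls the spectral radius of $B_K^{\widehat s}=\tfrac12 A_K^{\widehat s}$ so the Neumann series converges, and Sherman--Morrison turns $(E-B_K^{\widehat s})^{-1}$ back into something expressible via $C_K^{-1}$, which is how \eqref{eq-def of matrix} gets upgraded to the clean $[C_K^{-1}]_{i,s}/[C_K^{-1}]_{s,s}$ form of Theorem~\ref{thm-intromain}. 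A secondary technical point is the bookkeeping of which monomials are ``already reduced'' when one index is shared between $J$ and the target — I would handle this by inducting on $|K\setminus J|$ and carefully separating the contribution of the distinguished element $s=K\setminus J$.
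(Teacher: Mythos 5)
Your proposal follows the paper's own proof essentially step for step: factor multiplication by $\varpi_I$ into successive single-generator operators, read off the $i\notin J$ case directly, and for $i\in J$ iterate the Harada--Horiguchi--Masuda relation to eliminate $\varpi_i^2$, recognizing the resulting rewriting recursion as the Neumann series $\sum_{m\ge 1}(B_K^{\widehat s})^m$ whose convergence is secured by the Perron--Frobenius bound (Lemma~\ref{lem: convergence of matrix series}) and whose diagonal $t$-contribution is $(E-B_J)^{-1}=2C_J^{-1}$; you also correctly place Sherman--Morrison as the later step converting $b^{K,\widehat s}_{i,s}$ into $[C_K^{-1}]_{i,s}/[C_K^{-1}]_{s,s}$ for Theorem~\ref{thm-intromain} rather than as part of this theorem's proof. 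The only cosmetic difference is your suggested dimension-count justification that the squarefree monomials form a basis, where the paper instead relies on Theorem~\ref{thm-PSM} together with the known Peterson Schubert basis.
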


\begin{proof}
Recall that $\varpi_I=\prod_{m=1}^l\varpi_{i_m}$. To calculate the product $\varpi_I\varpi_J$, we can multiply each element $\varpi_{i_m}$ by $\varpi_J$, which can be represented by a matrix according to the ordered basis of all subsets of $\Delta$. Then the matrix $(d^K_{I, J})_{J, K}$ of structure constants $d_{I,J}^K$ will be equal to the product of these matrices. Without loss of generality, we can consider the calculation of $d_{i,J}^K$.

For $i\notin J$,
\[ \varpi_{i}\varpi_{J}=\varpi_{J\bigsqcup\{i\} },
    \]
so we have $d_{i,J}^{J\bigsqcup\{i\}}=1$.

    It remains to compute the structure constants for $i\in J$.
    Recall that we have the relation 
    $$\sum_{j\in \Delta}c_{ij}\varpi_i\varpi_j-2t\varpi_i=0$$ 
    in the presentation \eqref{eq-HHMpre} of the equivariant cohomology ring $H_S^*(\Pet_G)$ of the Peterson variety. Since $c_{i,i}=2$, for any $i\in \Delta$, it is equivalent to
    \[ \varpi_i^2=\sum_{j \neq i}b_{i,j} \varpi_i\varpi_j+t\varpi_i,
    \]
    where $b_{i, j}$ is the $(i,j)$-entry of  $B_{\Delta}=E-\frac{1}{2}C_\Delta$. Now, let us compute
\begin{equation}\label{eq-computation}
\begin{aligned}
&\varpi_i\varpi_{J}\\=
&\left(\sum_{a_1\neq i}b_{i,a_1}\varpi_i\varpi_{a_1}+t\varpi_i\right)\varpi_{J\setminus\{i\}} \\
        =&\sum_{a_1\in J\setminus\{i\}}b_{i,a_1}\varpi_{a_1}\varpi_{J}+\sum_{s\notin J}b_{i,s}\varpi_{J\cup\{s\}} +t\varpi_J\\
        =&\sum_{a_1\in J\setminus\{i\}} b_{i,a_1}\left(\sum_{a_2\neq a_1}b_{a_1,a_2}\varpi_{a_1}\varpi_{a_2}+t\varpi_{a_1}\right)\varpi_{J\setminus\{a_1\}}+\sum_{s\notin J}b_{i,s}\varpi_{J\cup\{s\}}+t\varpi_J\\
        =&\sum_{a_1\in J\setminus\{i\}}\sum_{a_2\in J\setminus\{a_1\}}b_{i,a_1}b_{a_1,a_2}\varpi_{a_2}\varpi_J\\&+\left(\sum_{s\notin J}b_{i, s}+\sum_{a_1\in J\setminus\{i\}}\sum_{s\notin J}b_{i,a_1}b_{a_1,s} \right)\varpi_{J\cup\{s\}}+\left(1+\sum_{a_1\in J\setminus\{i\}}b_{i,a_1}\right)t\varpi_J \\
        =&\cdots\cdots \\
        =&\sum_{a_m\in J\setminus\{a_{m-1}\}}b_{i,a_m}^{J,m}\varpi_{a_m}\varpi_J+\sum_{s\notin J}\left(b_{i,s}^{J,1}+b_{i,s}^{J,2}+\cdots+b_{i,s}^{J,m}\right)\varpi_{J\cup\{s\}}\\
        &+(F_i^{J,0}+\cdots+F_i^{J,m-1})t\varpi_J,
\end{aligned}
\end{equation}
where 

    \begin{align*}
&b_{i,s}^{J,1}=
b_{i,s},\\
&b_{i,s}^{J,2}=\sum_{a_1\in J\setminus\{i\}}
b_{i,a_1}b_{a_1,s},\\
&\cdots\\
&b_{i,s}^{J,m}=\sum_{a_1\in J\setminus\{i\}}\sum_{a_2\in J\setminus\{a_1\}}\cdots\sum_{a_{m-1}\in J\setminus\{a_{m-2}\}}
b_{i,a_1}b_{a_1,a_2}\cdots b_{a_{m-1},s},\\
&b_{i,a_m}^{J,m}=\sum_{a_1\in J\setminus\{i\}}\sum_{a_2\in J\setminus\{a_1\}}\cdots\sum_{a_{m-1}\in J\setminus\{a_{m-2}\}}
b_{i,a_1}b_{a_1,a_2}\cdots b_{a_{m-1},a_m},\\
&F_i^{J,0}=1,\\
&F_i^{J,1}=\sum_{a_1\in J\setminus\{i\}}b_{i,a_1},\\
&\cdots\\
&F_i^{J,m-1}
=\sum_{a_1\in J\setminus\{i\}}\cdots\sum_{a_{m-1}\in J\setminus\{a_{m-2}\}}
b_{i,a_1}b_{a_1,a_2}\cdots b_{a_{m-2},a_{m-1}}.
    \end{align*}

To finish the proof of the above theorem, it suffices to prove the following lemmas.

\begin{lemma} \label{lemma-matrixpower}
Let $J,K\subset \Delta$, $i\in J$, $K=J \bigsqcup \{s\}$. Then for any $m\geq 1$ and $k\in K$, we have
\begin{equation*}
[(B_{K}^{\widehat{s}})^m]_{i,k}=\sum_{a_1\in J\setminus\{i\}}\sum_{a_2\in J\setminus\{a_1\}}\cdots\sum_{a_{m-1}\in J\setminus\{a_{m-2}\}}
b_{i,a_1}b_{a_1,a_2}\cdots b_{a_{m-1},k}.
\end{equation*}
\end{lemma}

\begin{proof}
    Recall that $B_K^{\widehat{s}}$ is defined as $B_K:=\frac{1}{2}A_K$ with the entries in the row indexed by $s$ being zeros. Since $A_K:=2E-C_K$ and $c_{ii}=2$ for any $i\in\Delta$, we have $b_{i,i}=0$ for any $i\in K$. Hence, whenever $a_{i}=a_{i-1}$ for some $i\in[m-1]$,  
    \[
    b_{i,a_1} b_{a_1,a_2} \cdots b_{a_{m-1},k} = 0,
    \]
    where we take the convention $a_0:=i$.
Therefore, we have
    \begin{align*}
        [(B_{K}^{\widehat{s}})^m]_{i,k}&=\sum_{a_1\in K\setminus\{s\}}\sum_{a_2\in K\setminus\{s\}}\cdots\sum_{a_{m-1}\in K\setminus\{s\}} b_{i,a_1}b_{a_1,a_2}\cdots b_{a_{m-1},k}\\
        &=\sum_{a_1\in K\setminus\{i,s\}}\sum_{a_2\in K\setminus\{a_1,s\}}\cdots\sum_{a_{m-1}\in K\setminus\{a_{m-2},s\}} b_{i,a_1}b_{a_1,a_2}\cdots b_{a_{m-1},k}\\
        &=\sum_{a_1\in J\setminus\{i\}}\sum_{a_2\in J\setminus\{a_1\}}\cdots\sum_{a_{m-1}\in J\setminus\{a_{m-2}\}}
b_{i,a_1}b_{a_1,a_2}\cdots b_{a_{m-1},k}.
    \end{align*}
\end{proof}

A subset $K \subset \Delta$ is called \emph{connected} if the induced Dynkin diagram with the set of vertices $K$ is a connected graph. We also need the following lemma.

\begin{lemma}\label{lem: convergence of matrix series} Let $K\subset\Delta$. Then for any $s\in K$,  all eigenvalues of the matrix $B_K^{\widehat{s}}$ have absolute values less than 1. As a consequence, the matrix $E-B_K^{\widehat{s}}$ is invertible and we have
    \begin{equation}\label{eq-convergence}
        B_K^{\widehat{s}}\left(E-B_K^{\widehat{s}}\right)^{-1}=\sum_{m=1}^{\infty}\left(B_K^{\widehat{s}}\right)^m.
    \end{equation}
\end{lemma}
\begin{proof}
    It suffices to prove the lemma for connected $K$, since otherwise the matrix $B_K^{\widehat{s}}$ is block diagonal. Now suppose $K$ is a connected subset of $\Delta$, it is shown in \cite[Lemma 2]{damianou2014characteristicpolynomialcartanmatrices} that the eigenvalues of the Coxeter adjacency matrix $A_{K}$ are
    \[2\cos{\frac{m_i\pi }{h}},\]
    where $m_i$'s are the exponents of the simple Lie algebra $\mathfrak{g}$ associated with $K$ and $h$ is the Coxeter number of $\mathfrak{g}$. It follows immediately that all eigenvalues of the matrix $A_K$
    have absolute values less than 2, since all $m_i$'s are natural numbers less than $h$. Recall that $B_K^{\widehat{s}}$ is defined as $B_K:=\frac{1}{2}A_K$ with the entries in the row indexed by $s$ being zeros. 
    We can decompose this procedure into two steps: the first step is to multiply by $\frac{1}{2}$, and the second step is to set the entries in the row indexed by $s$ to zeros. The first step will halve all the eigenvalues, which is exactly what we want---absolute values less than 1. We claim that the second step can only keep or decrease the absolute values of the eigenvalues. Since for $i\neq j$, the off-diagonal entries $c_{ij}$ of the Cartan matrix are all non-positive, it is easy to see that $B_K^{\widehat{s}}$ is a non-negative matrix for any $K\subset\Delta$ and $s\in K$. The claim follows from the classical Perron--Frobenius theorem in \cite[Theorem 8.8.1(b)]{algebraicgraphtheoryperronfreobenius}, because we have assumed $K$ to be connected, and then the underlying directed graph of $K$ will be strongly connected. The proof of the first part of the lemma is completed, and the consequence follows from basic linear algebra.
\end{proof}

\noindent Due to the convergence of the series of matrices in \eqref{eq-convergence} and by Lemma \ref{lemma-matrixpower}, we know that the following limits exist: 
\begin{equation}
\label{eq-limit=0}
    \lim_{m\to \infty} b_{i,a_m}^{J,m}=\lim_{m\to \infty}[(B_K^{\widehat{s}})^m]_{i,a_m}=0,
\end{equation}
and 
\begin{equation}
\label{eq-limitofb}
\lim_{m\to \infty}b_{i,s}^{J,1}+b_{i,s}^{J,2}+\cdots+b_{i,s}^{J,m}=\sum_{m=1}^{\infty} [(B_K^{\widehat{s}})^m]_{i,s}=b_{i,s}^{K,\widehat{s}}.
\end{equation}

For the coefficient of $\varpi_J$ in \eqref{eq-computation}, notice that for any $J\subset \Delta$, the diagonal entries of $B_J$ are all zero. Similarly to the proof of Lemma \ref{lemma-matrixpower}, for any $l\geq 1$, we have  $$F_i^{J,l}=\sum_{j\in J}[(B_J)^{l}]_{i,j}.$$
Similar to the proof of Lemma \ref{lem: convergence of matrix series}, the series $\sum\limits_{n=0}^\infty(B_J)^n$ converges and 
$$\sum\limits_{m=0}^\infty(B_J)^m=(E-B_J)^{-1}=(\frac{1}{2}C_J)^{-1}=2C_J^{-1}.$$
So we have 
\begin{equation}\label{eq-limitofF}
    \begin{aligned}
\lim_{m\to \infty}(F_i^{J,0}+\cdots+F_i^{J,m-1})t&=\left(\sum_{m=0}^\infty\sum_{j\in J}[(B_J)^{m}]_{i,s}\right) t\\
&=t\sum_{j\in J}\left[\sum_{m=0}^\infty(B_J)^{m}\right]_{i,j}\\
&=2t\sum_{j\in J}[C_J^{-1}]_{i,j},
\end{aligned}
\end{equation}
where we can change the order of summation because for any $J\subset \Delta, B_J$ is a nonnegative matrix and the series above is a positive-term series. 

Compare \eqref{eq-limit=0}, \eqref{eq-limitofb}, \eqref{eq-limitofF} with \eqref{eq-def of matrix} and \eqref{eq-computation}, Theorem \ref{thm-main} is proved.

\end{proof}

Finally, we prove the following lemma. Combined with Corollary \ref{cor-relation} and Theorem \ref{thm-main}, it proves our main Theorem \ref{thm-intromain} in the introduction.

\begin{lemma}\label{lem: manifestly positive formula}
Let $J,K\subset \Delta$, $i\in J$, $K=J \bigsqcup \{s\}$. Then we have 
    \[
        b^{K, \widehat{s}}_{i, s}  = \frac{[C_{K}^{-1}]_{i,s}}{[C_{K}^{-1}]_{s,s}},
    \]
    where $b^{K, \widehat{s}}_{i,s}$ is defined in \eqref{eq-def of matrix},  $[C_{K}^{-1}]_{i,s}$ and $[C_{K}^{-1}]_{s,s}$  denote respectively the $(i, s)$-entry and $(s, s)$-entry of the matrix $C_{K}^{-1}$.
\end{lemma}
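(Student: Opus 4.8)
The plan is to express $b^{K,\widehat{s}}_{i,s}$, which by definition is the $(i,s)$-entry of $B_K^{\widehat{s}}(E-B_K^{\widehat{s}})^{-1}$, directly in terms of $C_K^{-1}$ by a careful bookkeeping of what the row-killing operation does under matrix inversion. First I would observe that $B_K^{\widehat{s}} = P B_K$, where $P = E - E_{ss}$ is the diagonal projection that kills the $s$-th coordinate ($E_{ss}$ being the elementary matrix with a single $1$ in position $(s,s)$). Then $E - B_K^{\widehat{s}} = E - PB_K$, and I want to understand $(E-PB_K)^{-1}$. Writing $E - PB_K = E - B_K + (E-P)B_K = \tfrac12 C_K + E_{ss}B_K$, and noting that $E_{ss}B_K$ is a rank-one matrix supported on the $s$-th row, this is a rank-one perturbation of $\tfrac12 C_K$, so the Sherman--Morrison formula applies. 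This is the route flagged in the introduction, so I would follow it.

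The key steps, in order: (1) rewrite $E - B_K^{\widehat{s}} = \tfrac12 C_K + e_s v^{\top}$ where $e_s$ is the $s$-th standard basis column and $v^{\top}$ is the $s$-th row of $B_K = E - \tfrac12 C_K$, i.e.\ $v^{\top} = e_s^{\top}(E - \tfrac12 C_K) = e_s^{\top} - \tfrac12 e_s^{\top}C_K$; (2) apply Sherman--Morrison: $(\tfrac12 C_K + e_s v^{\top})^{-1} = 2C_K^{-1} - \dfrac{2C_K^{-1} e_s v^{\top} 2C_K^{-1}}{1 + v^{\top} 2C_K^{-1} e_s}$, after first checking the scalar $1 + 2v^{\top}C_K^{-1}e_s$ is nonzero (it will turn out to equal $2[C_K^{-1}]_{s,s}$, which is positive since $C_K^{-1}$ has positive diagonal); (3) use the identity $v^{\top} = e_s^{\top} - \tfrac12 e_s^{\top} C_K$, so that $v^{\top} C_K^{-1} = e_s^{\top} C_K^{-1} - \tfrac12 e_s^{\top}$, which simplifies $v^{\top}\cdot 2C_K^{-1}e_s = 2[C_K^{-1}]_{s,s} - 1$ and hence the Sherman--Morrison denominator to $2[C_K^{-1}]_{s,s}$; (4) assemble $B_K^{\widehat{s}}(E-B_K^{\widehat{s}})^{-1} = (E - P\,\tfrac12 C_K \cdot \text{stuff})\dots$ — more cleanly, use $B_K^{\widehat{s}}(E-B_K^{\widehat{s}})^{-1} = (E-B_K^{\widehat{s}})^{-1} - E$, so its $(i,s)$-entry is $[(E-B_K^{\widehat{s}})^{-1}]_{i,s} - \delta_{i,s}$, and since $i \in J$ means $i \neq s$ the Kronecker term drops; (5) extract the $(i,s)$-entry of the Sherman--Morrison expression and simplify, using $v^{\top} C_K^{-1} e_s = [C_K^{-1}]_{s,s} - \tfrac12$ again in the numerator, to land on $\dfrac{[C_K^{-1}]_{i,s}}{[C_K^{-1}]_{s,s}}$.

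I expect the main obstacle to be the algebraic simplification in step (5): the Sherman--Morrison correction term's $(i,s)$-entry is $-\dfrac{4[C_K^{-1}]_{i,s}\,(v^{\top}C_K^{-1})_s}{2[C_K^{-1}]_{s,s}}$ where $(v^{\top}C_K^{-1})_s = [C_K^{-1}]_{s,s} - \tfrac12$, and combining this with the leading term $2[C_K^{-1}]_{i,s}$ requires getting every factor of $2$ and every sign exactly right; a sign error anywhere collapses the whole identity. It is worth double-checking the final formula against a small rank-two example (say $K$ of type $A_2$, where $C_K^{-1} = \tfrac13\begin{pmatrix}2&1\\1&2\end{pmatrix}$) before trusting it. A secondary subtlety: one should note that the lemma is stated for $i \in J$, i.e.\ $i \neq s$, which is exactly what makes the $\delta_{i,s}$ term in step (4) vanish and is implicitly needed for the formula to have the stated clean form; I would remark on this explicitly. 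The role of Lemma~\ref{lem: convergence of matrix series} here is only to guarantee $E - B_K^{\widehat{s}}$ is invertible so that all of the above makes sense, which I would cite at the outset.
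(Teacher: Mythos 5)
Your proposal is correct and follows essentially the same route as the paper: both write $E - B_K^{\widehat{s}}$ as a rank-one perturbation of $\tfrac12 C_K$ supported on the $s$-th row and invert it via the Sherman--Morrison formula, with the denominator simplifying to $2[C_K^{-1}]_{s,s}$. Your use of the identity $B_K^{\widehat{s}}(E-B_K^{\widehat{s}})^{-1} = (E-B_K^{\widehat{s}})^{-1} - E$ (with the $\delta_{i,s}$ term vanishing since $i\neq s$) is a minor streamlining of the paper's explicit row-times-column computation, but the substance of the argument is identical.
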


\begin{proof}
    We start by expressing $b^{K, \widehat{s}}_{i, s}$ as a dot product:
    \[
    b^{K, \widehat{s}}_{i, s} = \left(\text{row indexed by $i$ of } B_K^{\widehat{s}}\right) \cdot \left(\text{column indexed by $s$ of } (E - B_K^{\widehat{s}})^{-1}\right).
    \]
    We calculate each part of the dot product separately.

    Let \(e_i\) be the standard basis column vector indexed by $i$, \(x_s\) be the column vector  of \(C_K\) indexed by $s$, \((x'_s)^{\text{T}}\) be the row vector  of \(C_K\) indexed by $s$, \(y_s\) be the column vector  of \(C_K^{-1}\) indexed by $s$, and \((y'_s)^{\text{T}}\) be the row vector  of \(C_K^{-1}\) indexed by $s$.  Recall that $B_K^{\widehat{s}}$ is defined as $B_K:=\frac{1}{2}A_K$ with the entries in the row indexed by $s$ being zeros and $A_K:=2E-C_K$. Since \(i \neq s\), we have
    \begin{align*}
        \text{the row indexed by $i$ of } B_K^{\widehat{s}} 
        &= e_i^{\text{T}} B_K^{\widehat{s}} \\
        &= e_i^{\text{T}} B_K \\
        &= e_i^{\text{T}} - \frac{1}{2} (x'_i)^{\text{T}}.
    \end{align*}
    Next, consider \((E - B_K^{\widehat{s}})^{-1}\) and we compute
    \begin{align*}
        (E - B_K^{\widehat{s}})^{-1} 
        &= \left(\frac{1}{2}C_K - e_s \left(\frac{1}{2}x_s - e_s\right)^{\text{T}}\right)^{-1} \\
        &= \left(\frac{1}{2}C_K\right)^{-1} + \frac{\left(\frac{1}{2}C_K\right)^{-1} e_s \left(\frac{1}{2}x_s - e_s\right)^{\text{T}} \left(\frac{1}{2}C_K\right)^{-1}}{1 - \left(\frac{1}{2}x_s - e_s\right)^{\text{T}} \left(\frac{1}{2}C_K\right)^{-1} e_s},
    \end{align*}
    where the second equality is by the Sherman--Morrison formula \cite{sherman1950adjustment}. 
    Note that
    \begin{align*}
        \left(\frac{1}{2}x_s - e_s\right)^{\text{T}}\left(\frac{1}{2}C_K\right)^{-1}e_s &= x_s^{\text{T}}C_K^{-1}e_s - 2e_s^{\text{T}}C_K^{-1}e_s \\
        &= 1 - 2[C_K^{-1}]_{s,s}, \\
        \left(\frac{1}{2}C_K\right)^{-1} e_s \left(\frac{1}{2}x_s - e_s\right)^{\text{T}} \left(\frac{1}{2}C_K\right)^{-1} &= 2C_K^{-1}e_sx_s^{\text{T}}C_K^{-1} - 4C_K^{-1}e_se_s^{\text{T}}C_K^{-1} \\
        &= 2y_se_s^{\text{T}} - 4y_s(y'_s)^{\text{T}}.
    \end{align*}
    Then we have 
    \[
    (E - B_K^{\widehat{s}})^{-1} = 2C_K^{-1} + \frac{y_se_s^{\text{T}} - 2y_s(y'_s)^{\text{T}}}{[C_K^{-1}]_{s,s}}.
    \]
    Now we consider \(\text{the column indexed by $s$ of } (E - B_K^{\widehat{s}})^{-1}\):
    \begin{align*}
        \text{the column indexed by $s$ of } (E - B_K^{\widehat{s}})^{-1} 
        &= (E - B_K^{\widehat{s}})^{-1} e_s \\
        &= 2 C_K^{-1} e_s + \frac{y_s e_s^{\text{T}} e_s - 2 y_s (y'_s)^{\text{T}} e_s}{[C_K^{-1}]_{s,s}}.
    \end{align*}
    Therefore, we have
   \begin{align*}
        &\left[ B_K^{\widehat{s}}(E - B_K^{\widehat{s}})^{-1} \right]_{i,s} \\
        =& \left( e_i^{\text{T}} - \frac{1}{2} (x'_i)^{\text{T}} \right) \cdot \left( 2 C_K^{-1} e_s + \frac{y_s e_s^{\text{T}} e_s - 2 y_s (y'_s)^{\text{T}} e_s}{[C_K^{-1}]_{s,s}} \right) \\
        =& 2 e_i^{\text{T}} C_K^{-1} e_s - (x'_i)^{\text{T}} C_K^{-1} e_s \\
        &+ \frac{e_i^{\text{T}} y_s e_s^{\text{T}} e_s - 2 e_i^{\text{T}} y_s (y'_s)^{\text{T}} e_s - \frac{1}{2} (x'_i)^{\text{T}} y_s e_s^{\text{T}} e_s + (x'_i)^{\text{T}} y_s (y'_s)^{\text{T}} e_s}{[C_K^{-1}]_{s,s}} \\
        =& 2[C_K^{-1}]_{i,s} - \delta_{is} + \frac{[C_K^{-1}]_{i,s}-2[C_K^{-1}]_{i,s}[C_K^{-1}]_{s,s} - \frac{1}{2}\delta_{is} + [C_K^{-1}]_{s,s}\delta_{is}}{[C_K^{-1}]_{s,s}} \\
        =& \frac{[C_K^{-1}]_{i,s}}{[C_K^{-1}]_{s,s}},
    \end{align*}
    where the Kronecker delta $\delta_{is}=0$ since $i \neq s$.
\end{proof}

\begin{remark}
    Lusztig--Tits \cite{lusztig1992inverse} gave an explicit formula for the $(i, j)$-entry of a Cartan matrix $C_K^{-1}$. If $i$ and $j$ are in different connected components of $K$, then $[C_K^{-1}]_{i,j}=0$. Now assume that $i$ and $j$ are in the same connected component of $K$. Let $i_1, i_2, \ldots, i_p$ be the unique sequence in $K$ such that $i_1=i, i_p=j$ and any two consecutive terms of the corresponding simple reflections $s_{i_1}, s_{i_2}, \ldots s_{i_p}$ are non-commuting. Let $W(i, j)$ be the parabolic subgroup of $W_K$ generated by $\left\{\sigma_k ; k \in K-\left\{i_1, i_2, \ldots, i_p\right\}\right\}$ and let $C(i, j)$ be the corresponding Cartan matrix. Then we have $[C_K^{-1}]_{i,j}=\frac{\det(C(i, j))}{det(C_K)}$.
\end{remark}

Now we give some examples of computations using our formula.

\begin{eg}
    For Lie type $B_3$, the structure constants matrix of $I=\{2\}$ is
    
    \[\left(d_{I, J}^K\right)_{J, K \subset[3]}=\begin{array}{cc}
    &J\\
    \begin{pmatrix}
  0  & 0  & 1 &  0 &  0  & 0  & 0  & 0\\
  0  & 0  & 0 &  0 &  1  & 0  & 0  & 0\\
  0  & 0  & 2t &  0 & \frac{1}{2} &  0   &1  & 0\\
  0  & 0  & 0 &  0 &  0 &  0 &  1 &  0\\
  0  & 0  & 0 &  0 &  2t  & 0 &  0& \frac{4}{3}\\
  0  & 0  & 0 &  0 &  0  & 0   &0  & 1\\
  0  & 0  & 0 &  0 &  0  & 0  & 2t & 1\\
  0  & 0  & 0 &  0 &  0  & 0 &  0 &  2t
    \end{pmatrix}
    &
    \begin{array}{c}
         \emptyset\\ \{1\} \\\{2\} \\\{3\}\\\{1,2\}\\\{1,3\}\\\{2,3\}\\\{1,2,3\}     
    \end{array}
    \end{array}
    \]
    From the matrix we can see $d_{\{2\},\{1,2\}}^{\{1,2\}}=2t$, $d_{\{2\},\{1,2\}}^{\{1,2,3\}}=\frac{4}{3}$, which means $\varpi_2\cdot \varpi_{\{1, 2\}}=2t\varpi_{\{1, 2\}}+\frac{4}{3}\varpi_{\{1, 2, 3\}}.$ This matches the computation in \cite[Example 6.8]{goldin2021equivariant}
\end{eg}

It can be seen from the above that the structure constants matrix is quite sparse, hence our formula is very fast when doing actual computations. We give an example with larger rank in the non-equivariant case below.

\begin{eg}
    For Lie Type $A_9$, $I=\{3,6,8\}$, $J=\{1,3,5,6,7\}$, from our formula and letting $t=0$, one can deduce 
    \[
    \varpi_I \cdot  \varpi_J=\frac{18}{35}\varpi_{\{1,2,3,4,5,6,7,8\}}+\frac{1}{5}\varpi_{\{1,2,3,5,6,7,8,9\}}+\frac{2}{7}\varpi_{\{1,3,4,5,6,7,8,9\}}.
    \]
    Hence, by Theorem \ref{thm-PSbasis}, we have
    \begin{align*}
    p_I\cdot p_J=& \frac{1}{1!\cdot 1!\cdot1!}\cdot\frac{1}{1!\cdot1!\cdot3!}\varpi_I\varpi_J\\ 
    =&\frac{1}{3!}\left(\frac{18}{35}\varpi_{\{1,2,3,4,5,6,7,8\}}+\frac{1}{5}\varpi_{\{1,2,3,5,6,7,8,9\}}+\frac{2}{7}\varpi_{\{1,3,4,5,6,7,8,9\}}\right)\\
    =&\frac{8!}{3!}\frac{18}{35}p_{\{1,2,3,4,5,6,7,8\}}+\frac{3!\cdot5!}{3!}\frac{1}{5}p_{\{1,2,3,5,6,7,8,9\}}+\frac{7!}{3!}\frac{2}{7}p_{\{1,3,4,5,6,7,8,9\}}\\
    =&3456p_{\{1,2,3,4,5,6,7,8\}}+24p_{\{1,2,3,5,6,7,8,9\}}+240p_{\{1,3,4,5,6,7,8,9\}},
    \end{align*}
    which matches the computation for the non-equivariant case in \cite[Example 5.2]{abe2024geometry}.
\end{eg}

We have the following corollary. In particular, it means that our formulas provide an algebraic proof of the positivity of the equivariant structure constants of the Peterson Schubert calculus.

\begin{corollary}\label{cor-positivity}
The structure constant  $d_{I,J}^K$ (and the structure constant $c_{I,J}^K$) is a polynomial in $t$ with non-negative coefficients for all $I, J, K\subset\Delta$.
\end{corollary}

\begin{proof}
    It is well known that the entries of the inverse of any Cartan matrix are all non-negative (see, for example, \cite[Exercise 13.8]{humphreys1972}), hence by \eqref{eq-def of matrix} and Lemma \ref{lem: manifestly positive formula}, all matrices $(d^{K}_{i, J})_{J, K}$ in Theorem \ref{thm-main} are polynomials in $t$ with non-negative coefficients.
Since the entries of the resulting matrix product are still polynomials in $t$ with non-negative coefficients, this corollary follows directly. 
\end{proof}

We have the following corollary, which gives a simple criterion for when the structure constants are non-zero.

\begin{corollary} \label{cor-nonzero}
    The structure constant $c_{I,J}^K\neq 0$ (equivalently, $d_{I,J}^K\neq 0$) if and only if 
    \item[$\quad\bullet$] $K \supset I \cup J $, and
    \item[$\quad\bullet$] For each connected component $K_k$ of $K$, we have $|K_k|\leq|K_k\cap I| + |K_k\cap J|$.
\end{corollary}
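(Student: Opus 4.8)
The plan is to analyze when the $(J,K)$-entry of the matrix product $(d^K_{i_1,J})_{J,K}\cdots(d^K_{i_l,J})_{J,K}$ is nonzero, using the positivity already established in Corollary \ref{cor-positivity}. Because every matrix $(d^K_{i,J})_{J,K}$ has non-negative entries (polynomials in $t$), there can be no cancellation in the product: the $(J,K)$-entry is nonzero if and only if there exists a chain $J=J_0,J_1,\dots,J_l=K$ of subsets of $\Delta$ with $d^K_{i_m,J_{m-1}}\neq 0$ (with the superscript $K$ fixed) for each $m=1,\dots,l$, reading off the nonzero entries stepwise. So the whole problem reduces to understanding, for a single generator $\varpi_i$, exactly when $d^K_{i,J}\neq 0$, and then chaining these conditions.

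First I would unwind the three cases of \eqref{eq-def of matrix}. In all nonzero cases one needs $J\subset K$ and either $K=J\sqcup\{i\}$, or $i\in J$ with $|K\setminus J|\le 1$. The key input is that $[C_K^{-1}]_{i,s}>0$ precisely when $i$ and $s$ lie in the same connected component of $K$ (and $[C_K^{-1}]_{i,s}=0$ otherwise), which is the Lusztig--Tits description recalled in the Remark after Lemma \ref{lem: manifestly positive formula}; similarly $2t\sum_{k\in K}[C_K^{-1}]_{i,k}$ is a nonzero polynomial in $t$ iff $i\in K$. Feeding this in: the ``off-diagonal'' move $J\to J\sqcup\{s\}$ with $s\neq i$ is available iff $i\in J$ and $i,s$ lie in the same connected component of $K$ (equivalently, after adding $s$, $i$ and $s$ lie in the same component of $J\sqcup\{s\}$); the ``create $i$'' move is always available when $i\notin J$; and the ``diagonal'' move $J=K$ requires $i\in K$. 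Thus along any chain realizing a nonzero entry, each step either enlarges the current subset by one element (which must be connected, through $i$, to the rest inside $K$) or fixes it, and $i_m$ must belong to $K$ at the step where it acts when the move is diagonal or off-diagonal — and it is harmless when it merely creates a new singleton, but then it still ends up inside $K$. Tracking this, one sees that a nonzero entry forces $K\supset I\cup J$, and conversely that when $K\supset I\cup J$ the chain can be built greedily.

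The main obstacle — and the part requiring genuine care — is proving the connectivity condition, namely that for each connected component $K_k$ of $K$ one needs $|K_k|\le |K_k\cap I|+|K_k\cap J|$, and that this is sufficient. The ``only if'' direction: restrict attention to a fixed component $K_k$; every element of $K_k\setminus(I\cup J)$ must be introduced by an off-diagonal move $J'\to J'\sqcup\{s\}$, and such a move is possible only if the acting generator $i_m$ already lies in $J'\cap K_k$ and is connected to $s$ within $K_k$ — so each ``new'' element of $K_k$ consumes one of the indices of $I$ located in $K_k$, while the elements of $K_k\cap J$ are already present at the start; a counting argument (each off-diagonal step into $K_k$ uses a distinct element of $I\cap K_k$ as the pivot, since once used as pivot the generator's job for that component is to remain, not to pivot again — here I would need to argue the pivots can be taken distinct, or more robustly, bound the number of elements addable to $K_k$ by $|I\cap K_k|$ directly via the structure of the chain) gives $|K_k|-|K_k\cap J|\le |K_k\cap I|$. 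The ``if'' direction: assuming $K\supset I\cup J$ and the inequality on each component, I would construct an explicit chain component by component — within each $K_k$, first use the creation moves to bring in the elements of $I\cap K_k$ not already in $J$, then, since $K_k$ is connected and now contains enough elements of $I$, grow outward one vertex at a time along the Dynkin diagram of $K_k$, at each step pivoting on a neighbor already present (which exists by connectivity), until all of $K_k$ is filled; the component-wise inequality is exactly what guarantees we never run out of available pivots. Assembling the per-component chains (the matrices respect the block decomposition, as noted in the proof of Lemma \ref{lem: convergence of matrix series}) completes the argument. The delicate point to get right is the bookkeeping showing the pivot elements of $I$ are not over-counted across the off-diagonal steps; I would formalize this by induction on $l$, peeling off $i_1$ and distinguishing whether its move is creation, off-diagonal, or diagonal, and invoking Remark \ref{rmk-nonzero} for the shape of the statement.
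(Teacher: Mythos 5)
Your overall strategy coincides with the paper's: use positivity to rule out cancellation, reduce to chains $J=J_0\to J_1\to\cdots\to J_l=K$ of single-generator moves, classify the three kinds of moves in \eqref{eq-def of matrix}, and count component by component (the paper packages the chain as an induction on $|I|$ via $d_{I,J}^K=\sum_{K'}d_{I',J}^{K'}d_{i,K'}^K$). However, there is a concrete error at the base of your analysis: in the product $(d^{K}_{i_1,J})_{J,K}\cdots(d^{K}_{i_l,J})_{J,K}$ the superscript is \emph{not} fixed at $K$ along a chain; the $m$-th factor is $d^{J_m}_{i_m,J_{m-1}}$, so the off-diagonal entry is $b^{J_m,\widehat{s}}_{i_m,s}=[C_{J_m}^{-1}]_{i_m,s}/[C_{J_m}^{-1}]_{s,s}$, which by Lusztig--Tits is nonzero iff $i_m$ and $s$ lie in the same connected component of the \emph{intermediate} set $J_m=J_{m-1}\sqcup\{s\}$ --- not of $K$. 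Your parenthetical claim that these two conditions are equivalent is false: the path joining $i_m$ to $s$ inside $K$ may pass through vertices not yet added. For the ``only if'' direction this is harmless, since connectivity in $J_m$ implies connectivity in $K$; and the pivot-distinctness issue you flag there is a non-issue, because each generator $i_m$ acts at exactly one step, so the assignment of an enlarging step for $K_k$ to its acting generator $i_m\in I\cap K_k$ is automatically injective, which already yields $|K_k|\le|K_k\cap I|+|K_k\cap J|$.

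The genuine gap is in the ``if'' direction, precisely where the distinction above matters. Your construction must produce, for each newly added vertex $s$, an \emph{unused element of $I$} connected to $s$ inside the current set together with $s$; ``a neighbor already present'' is not enough, since that neighbor need not be an available generator, and an available generator need not be adjacent to $s$. One must order the additions so that each new vertex is adjacent to a connected component of the current set that still contains an unused generator from $I\cap J\cap K_k$ (such an addable vertex exists because $K_k$ is connected and that component is proper in $K_k$), then check that the component-wise inequality supplies exactly enough pivots, with leftover generators acting diagonally (always possible since $I\subseteq K$). As written your greedy scheme can fail: in type $A_4$ with $I=J=\{1,2\}$ and $K=\{1,2,3,4\}$, adding $4$ before $3$ leaves no valid pivot, since neither $1$ nor $2$ is connected to $4$ inside $\{1,2,4\}$, even though both are connected to $4$ inside $K$. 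The paper handles this by induction on $|I|$, peeling off one generator and explicitly tracking how a connected component of $K$ splits upon removing a single vertex and how the inequality transfers to $K'$; some version of that component-splitting bookkeeping is what your proposal still needs.
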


\begin{proof}
    By Corollary \ref{cor-relation}, the structure constant $c_{I,J}^K\neq 0$ is equivalent to the structure constant $d_{I,J}^K\neq 0$.
    
    When $|I|=1$, we have $I=\{i\}$, and we consider the value of $d_{i,J}^K$. Then by \eqref{eq-def of matrix}, $d_{i,J}^K\neq 0$ if and only if we have $K=J\bigsqcup\{i\}$, or $i\in J$, $|K|=|J|+1$, $K\setminus J=\{s\}\neq\{i\}$ and $b_{i,s}^{K,\widehat{s}}\neq 0$, or $i\in J$, $K=J$ and $\sum_{k\in K}[C_K^{-1}]_{i,k}\neq 0$. In any case, we have $K \supset \{i\}\cup J$, so in necessity, it remains to prove $|K_k|\leq|K_k\cap \{i\}| + |K_k\cap J|$ for each connected component $K_k$ of $K$ in each case.
    
    For the case $K=J\bigsqcup\{i\}$, since $i$ can only be contained in exactly one connected component of $K$, it is easy to see that $|K_k|=|K_k\cap \{i\}| + |K_k\cap J|$ for each connected component $K_k$ of $K$. 
    
    Now consider the case $i\in J$, $|K|=|J|+1$, $K\setminus J=\{s\}\neq\{i\}$ and $b_{i,s}^{K,\widehat{s}}\neq 0$. In this case, it is not hard to see that $|K_k|=|K_k\cap \{i\}| + |K_k\cap J|$ for each connected component $K_k$ is equivalent to $i$ and $s$ being in the same component. By \cite{lusztig1992inverse}, the latter is equivalent to $[C_{K}^{-1}]_{i,s}\neq 0$. By Lemma \ref{lem: manifestly positive formula}, this case is proved.

    Then consider the case $i\in J$, $K=J$ and $\sum_{k\in K}[C_K^{-1}]_{i,k}\neq 0$. Note that $\sum_{k\in K}[C_K^{-1}]_{i,k}\neq 0$ always holds, since all entries of $C_K^{-1}$ are non-negative. It is easy to see that $i\in J$ and $K=J$ imply $|K_k|\le|K_k\cap \{i\}| + |K_k\cap J|$ for each connected component $K_k$.

    As for the sufficiency, we have $K \supset \{i\}\cup J$ and for each connected component $K_k$ of $K$, $|K_k|\leq|K_k\cap \{i\}| + |K_k\cap J|$. If $|K_k|=|K_k\cap \{i\}| + |K_k\cap J|$, from the discussion above, we have $K\setminus J=\{s\}$, $s=i$ or $s$ and $i$ are in the same connected component, which means $d_{i,J}^K\neq 0$. If $|K_k|<|K_k\cap \{i\}| + |K_k\cap J|$ for some $K_k$, by $K\supset \{i\} \cup J$, we have $K=J$, which also means $d_{i,J}^K\neq 0$.

    For the general $I$, choose any \( i \in I \) and suppose \( I=I' \bigsqcup \{i\}  \). By Theorem \ref{thm-main}, we have  
\[ 
d_{I,J}^K = \sum_{\substack{K'\subset\Delta}} d_{I',J}^{K'} d_{i,K'}^K. 
\]  
    Since all the terms in the summation are polynomials in $t$ with nonnegative coefficients by Corollary \ref{cor-positivity}, \( d_{I,J}^K \neq 0 \) if and only if there is a \(K'\subset\Delta\) such that \( d_{I',J}^{K'}\neq 0\) and \(d_{i,K'}^{K} \neq 0 \). We need to prove that they are further equivalent to $K \supset I \cup J $ and $|K_k|\le|K_k\cap I| + |K_k\cap J|$ for each connected component $K_k$ of $K$. 

We prove the necessity by induction on the cardinality of \( I \). So suppose \( d_{I',J}^{K'}\neq 0\) and \(d_{i,K'}^{K} \neq 0 \) for some $K'$. Because \(d_{i,K'}^{K} \neq 0 \), we have $i\in K'$ and $K'=K$, or \( K\supset\{i\}\cup K' \) and \( |K'| = |K| - 1 \). 

For the case of $i\in K'$ and $K'=K$, according to the induction hypothesis for $d_{I',J}^{K'}\neq 0$, we have
\begin{equation*}
\begin{aligned}
    |K_k|&=|K_k'| \\
    &\le |K_k' \cap I'| + |K_k' \cap J|\\
    &=|K_k \cap I'| + |K_k \cap J|\\
     &\le |K_k \cap I| + |K_k \cap J|
    \end{aligned}
\end{equation*} 
for each connected component $K_k$ of $K$, 
which completes the proof of this case.

For the case of \( K\supset\{i\}\cup K' \) and \( |K'| = |K| - 1 \), according to the induction hypothesis for $d_{I',J}^{K'}\neq 0$, \(K' \supset I '\cup J \) and for each connected component \( K_k' \) of \( K' \),  we have
\begin{equation}\label{eq-K' connected}
    |K_k'| \le |K_k' \cap I'| + |K_k' \cap J|.
\end{equation} 
It follows that \(K \supset I \cup J \).
Let \( K = K_1 \bigsqcup \dots \bigsqcup K_t \) be the decomposition of \( K \) into connected components. Without loss of generality, we assume that \( i \in K_1 \) and \( K \setminus K'=\{s\} \) (here $s$ may or may not equal $i$).  Note that \( K' \supset I '\cup J \), we have \( s \notin I' \) and \( s \notin J \). Since \(d_{i,K'}^{K} \neq 0 \), by the proof of the \(|I|=1\) case, it is not hard to see \(s \in K_1 \).  

    Assume that \( K_1 \setminus \{s\} = K_{1,1} \bigsqcup \dots \bigsqcup K_{1,r} \) is the decomposition of \( K_1 \setminus \{s\} \) into connected components. Here \( r = 0 \) means \( K_1 = \{i\}=\{s\} \), and \( r = 1 \) means \( K_1 \setminus \{s\} \) is still connected.  

    By definition, the decomposition of \( K' \) into connected components is:  
\[ 
K' = K_{1,1} \bigsqcup \dots \bigsqcup K_{1,r} \bigsqcup K_2 \bigsqcup \dots \bigsqcup K_t. 
\]  
    Define \(K_1':=K_1\setminus\{s\}= K_{1,1} \bigsqcup \dots \bigsqcup K_{1,r}.\)

    Since \( i \notin K_2, \dots, K_t \), when \( 2 \leq k \leq t \), we have \( K_k = K_k' \), the connected component of \( K' \). By \eqref{eq-K' connected}, we have \( |K_k| \le |K_k \cap I| + |K_k \cap J|, \) for $k=2, ..., t$.
 
    As for \( K_1 \) (the component containing \( i \)), we have
\begin{align*}
|K_1 \cap I| + |K_1 \cap J|=&|K_1 \cap (I' \bigsqcup \{i\})| + |K_1 \cap J| \\
=&|K_1 \cap (I' \bigsqcup \{i\})| + |K_1' \cap J| \, \\
=&|K_1 \cap I'| + 1 + |K_1' \cap J| \\
=&|K_1' \cap I'| + 1 + |K_1' \cap J| \\
\ge&|K_1'|+1\\
=&|K_1|,
\end{align*}
where the second equality is because $s\notin J$, the fourth equality is because $s\notin I'$, the fifth inequality is by \eqref{eq-K' connected}. The necessity is proved.

    As for the sufficiency, assume that \( K \supset I \cup J \), and for each connected component \( K_k \) of \( K \), we have   
\[ 
|K_k| \le |K_k \cap I| + |K_k \cap J|. 
\]  
Recall that \( I = \{i\} \bigsqcup I' \), and now we define \( K' = K \setminus \{i\} \). By \eqref{eq-def of matrix}, we have \(d_{i,K'}^K = 1 \).
    By definition, \( K' \supseteq I' \cup J \), so we have $i\notin J$. Using the same arguments as above, we know that each connected component \( K_k' \) of \( K' \) satisfies 
\[ 
|K_k'| \le |K_k' \cap I'| + |K_k' \cap J|. 
\]  
Applying the induction hypothesis to $I'$, we have \( d_{I',J}^{K'} \neq 0 .\) 
    This finishes the proof of the sufficiency. 
\end{proof}

\begin{remark} \label{rmk-nonzero}
By tracing the above proof carefully, one can see that the non-equivariant structure constant $m_{I,J}^K\neq 0$ if and only if $K \supset I \cup J $ and
$|K_k|=|K_k\cap I| + |K_k\cap J|$ for each connected component $K_k$ of $K$. 
\end{remark}

\begin{remark}
    Goldin--Gorbutt proved Corollary \ref{cor-nonzero} in the case of Lie type $A$ in \cite[Theorem 8]{goldin2022positive}.
\end{remark}

\section{Applications to mixed Eulerian numbers} \label{sec-Applications}
Firstly, we introduce the mixed $\Phi$-Eulerian numbers for arbitrary Lie types as in \cite{postnikov2009permutohedra} and their connections to the structure constants of the Peterson Schubert calculus. We mainly follow the notation as in \cite{Horiguchi2024mixed}.

Recall that \(\Phi\) is a crystallographic root system of rank \(n\). Let \(\Lambda\) be the associated integral weight lattice and \(\Lambda_{\mathbb{R}} = \Lambda \otimes \mathbb{R}\) be the weight space. The associated Weyl group \(W\) acts on the weight space \(\Lambda_{\mathbb{R}}\)  as a finite real reflection group. Taking a weight \(\chi \in \Lambda_{\mathbb{R}}\), the \emph{weight polytope} \(P_\Phi(\chi)\) is defined as the convex hull of the Weyl group orbit of \(\chi\):

\[P_\Phi(\chi) := \mathrm{ConvexHull}\{w(\chi) \in \Lambda_{\mathbb{R}} \mid w \in W\}.\]

\begin{eg}[Type $A_{n-1}$ weight polytope---Permutohedron] 
Let $n$ be a positive integer. The permutation group $S_n$ in $n$ letters acts naturally on $\mathbb{R}^n$ by permuting coordinates. For $a_1, \ldots, a_n \in \mathbb{R}$, the \emph{permutohedron} $P_n\left(a_1, \ldots, a_n\right)$ is defined to be the convex hull of points in the $S_n$-orbit of  $(a_1, \ldots, a_n)$
\[
P_n\left(a_1, \ldots, a_n\right):= \mathrm{ConvexHull}\left\{\left(a_{w(1)}, \ldots, a_{w(n)}\right) \in \mathbb{R}^n \mid w \in S_n\right\}.
\]
It is at most $(n-1)$-dimensional, sitting inside an affine hyperplane in $\mathbb{R}^n$. If $a_1, a_2, \ldots, a_n$ are distinct, then $P_n\left(a_1, \ldots, a_n\right)$ is $(n-1)$-dimensional, and Postnikov gave a formula \cite[Theorem 3.1]{postnikov2009permutohedra} of its volume using \emph{divided symmetrization}. This formula can be interpreted as a computation in the equivariant cohomology of the type $A$ permutohedral variety using the localization technique; see \cite[Section 3]{Horiguchi2024mixed}.
\end{eg}

Recall that \(\Delta := (\alpha_1, \ldots, \alpha_n)\) denotes the set of simple roots in \(\Phi\). In \cite{postnikov2009permutohedra}, Postnikov gave a formula for the volume of the weight polytope \(P_\Phi(\chi)\), with the normalization that the volume of the parallelepiped generated by the simple roots \(\alpha_1, \ldots, \alpha_n\) is 1.

Let \(\varpi_1, \ldots, \varpi_n\) be the fundamental weights. Suppose \(\chi = u_1 \varpi_1 + \cdots + u_n \varpi_n\) and consider the associated weight polytope \(P_\Phi(\chi)\). Its volume is a homogeneous polynomial \(V_\Phi\) of degree \(n\) in variables \(u_1, \ldots, u_n\):
\begin{equation}\label{eq-volume}
   V_\Phi(u_1, \ldots, u_n) := \text{volume of } P_\Phi(u_1 \varpi_1 + \cdots + u_n \varpi_n). 
\end{equation}

Postnikov \cite{postnikov2009permutohedra} defined the \emph{mixed \(\Phi\)-Eulerian numbers} \(A_{c_1, \ldots, c_n}^\Phi\), for natural numbers \(c_1, \ldots, c_n \) with \(c_1 + \cdots + c_n = n\), as the coefficients of the volume polynomial \eqref{eq-volume}:
\begin{equation} \label{eq-def of MEN}
    V_\Phi(u_1, \ldots, u_n) = \sum_{c_1, \ldots, c_n} A_{c_1, \ldots, c_n}^\Phi \frac{u_1^{c_1}}{c_1!} \cdots \frac{u_n^{c_n}}{c_n!}.
\end{equation}
By this definition, the mixed $\Phi$-Eulerian number $A_{c_1,\ldots,c_n}^{\Phi}$ is exactly the \emph{mixed volume} of $c_1$ copies of $P_{\Phi}\left(\varpi_1\right)$, $c_2$ copies of $P_{\Phi}\left(\varpi_2\right), \ldots$, and $c_n$ copies of $P_{\Phi}\left(\varpi_n\right)$, multiplied by $n!$. Here, the weight polytopes $P_{\Phi}\left(\varpi_1\right), \ldots, P_{\Phi}\left(\varpi_n\right)$ are called the \emph{$\Phi$-hypersimplices}. The mixed $\Phi$-Eulerian numbers are known to be non-negative integers; see \cite{postnikov2009permutohedra} for more details. When $\Phi$ is of type $A$, these numbers are simply called the \emph{mixed Eulerian numbers}.

Postnikov provided in \cite{postnikov2009permutohedra} a combinatorial formula for the mixed $\Phi$-Eulerian numbers in terms of certain binary trees. In \cite{Berget2023log}, Berget--Spink--Tseng studied the log-concavity of matroid $h$-vectors in relation to the mixed Eulerian numbers, using the fact that the cohomology ring of the type $A$ Permutohedral variety is exactly the Chow ring of the Boolean matroid. In \cite{Nadeau2023permutahedral}, Nadeau--Tawari found a beautiful relation between the mixed $\Phi$-Eulerian numbers and intersection numbers of Schubert varieties and the permutohedral variety for arbitrary Lie types. In \cite{Horiguchi2024mixed}, Horiguchi showed that the mixed Eulerian numbers can be written as intersection numbers of Schubert divisors in the Peterson variety for an arbitrary Lie type as follows.

\begin{theorem}[\protect{\cite[Theorem 1.1]{Horiguchi2024mixed}}]\label{thm-M=I}
    Let \(\Phi\) be an irreducible root system of rank $n$. Let \(c_1, \ldots, c_n\) be non-negative integers with \(c_1 + \cdots + c_n = n\). 
    Then the mixed \(\Phi\)-Eulerian number \(A_{c_1, \ldots, c_n}^{\Phi}\) is equal to\[A_{c_1, \ldots, c_n}^{\Phi} = \int_{\Pet_{G}} \varpi_1^{c_1} \varpi_2^{c_2} \cdots \varpi_n^{c_n},\]
    where, as before, $\Pet_{G}$ denotes the Peterson variety associated with the simple algebraic group $G$, $\varpi_i\in H^{2}(\Pet_{G};\mathbb{Q})$ denotes the first Chern class of the line bundle $L_{\varpi_{i}}$ on $\Pet_{G}$, which is also the image of the Schubert class $\sigma_{s_i} \in H^{2}(G/B;\mathbb{Q})$ under the restriction map $ H^{2}(G/B;\mathbb{Q}) \rightarrow H^{2}(\Pet_{G};\mathbb{Q})$.
\end{theorem}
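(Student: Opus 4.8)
The plan is to prove the identity by realizing both sides as intersection numbers and comparing them through the toric degeneration of the $W$-permutohedral variety to the Peterson variety. By the defining expansion \eqref{eq-def of MEN}, the mixed $\Phi$-Eulerian number $A^\Phi_{c_1,\ldots,c_n}$ is a coefficient of the volume polynomial $V_\Phi$ of the weight polytope $P_\Phi(u_1\varpi_1+\cdots+u_n\varpi_n)$; the volume normalization fixed in the excerpt (the lattice generated by $\alpha_1,\ldots,\alpha_n$ has covolume $1$) is precisely the lattice normalization for which polytope volumes equal toric intersection numbers. So the first step is to rewrite $A^\Phi_{c_1,\ldots,c_n}$ as an intersection number on the $W$-permutohedral variety, and the second step is to transport that number to $\Pet_G$ along the degeneration.

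First I would recall the smooth projective toric variety $X_W$ whose normal fan is the fan of Weyl chambers, i.e.\ the toric variety of the weight polytope. Each fundamental weight $\varpi_i$ corresponds to a nef divisor class $D_i$ on $X_W$ with $c_1(\mathcal O(D_i))=\varpi_i$, and for nef toric divisors the lattice volume of the associated polytope is computed by the top self-intersection:
\[
V_\Phi(u_1,\ldots,u_n)=\operatorname{vol}P_\Phi\bigl(\textstyle\sum_i u_i\varpi_i\bigr)=\frac{1}{n!}\int_{X_W}\Bigl(\sum_{i=1}^n u_i\varpi_i\Bigr)^{n}.
\]
Expanding the right-hand side by the multinomial theorem and extracting the coefficient of $u_1^{c_1}\cdots u_n^{c_n}$, the factor $\tfrac1{n!}\binom{n}{c_1,\ldots,c_n}=\tfrac{1}{c_1!\cdots c_n!}$ matches the normalization in \eqref{eq-def of MEN} exactly, yielding
\[
A^\Phi_{c_1,\ldots,c_n}=\int_{X_W}\varpi_1^{c_1}\cdots\varpi_n^{c_n}.
\]
It then remains to show this number is unchanged when $X_W$ is replaced by $\Pet_G$. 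For this I would invoke the flat degeneration of the $W$-permutohedral variety to the Peterson variety (see \cite{Abe2020Geometry,Klyachko1985orbits,Klyachko1995toric}): a flat proper family $\pi\colon\mathcal X\to\mathbb A^1$ whose general fibre is $X_W$ and whose special fibre is $\Pet_G$, together with line bundles $\mathcal L_i$ on the total space restricting to $L_{\varpi_i}$ on every fibre (inherited from the line bundles $L_{\varpi_i}$ on $G/B$, where the classes $\varpi_i$ live in Theorem \ref{thm-M=I}). The invariance principle is that for a flat proper family the degree of $c_1(\mathcal L_1|_{\mathcal X_b})\cdots c_1(\mathcal L_n|_{\mathcal X_b})$ is constant in $b$: it is the coefficient of $m_1\cdots m_n$ in the Snapper--Hilbert polynomial $(m_1,\ldots,m_n)\mapsto\chi\bigl(\mathcal X_b,\mathcal L_1^{\otimes m_1}\otimes\cdots\otimes\mathcal L_n^{\otimes m_n}|_{\mathcal X_b}\bigr)$, and flatness forces this Euler characteristic to be constant in $b$ for each fixed $(m_1,\ldots,m_n)$. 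Since $\mathbb A^1$ is connected, specializing from the general fibre to $b=0$ gives
\[
\int_{X_W}\varpi_1^{c_1}\cdots\varpi_n^{c_n}=\int_{\Pet_G}\varpi_1^{c_1}\cdots\varpi_n^{c_n},
\]
which combined with the previous display proves the theorem.

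The main obstacle is the construction and control of the degeneration, not the numerical bookkeeping. Two points need care. First, since $\Pet_G$ is singular and even non-normal, the symbol $\int_{\Pet_G}$ must be read as the Snapper intersection degree (equivalently, the pairing of $\varpi_1^{c_1}\cdots\varpi_n^{c_n}$ with the fundamental class $[\Pet_G]$ in Borel--Moore homology), which is well defined for any proper $n$-dimensional variety and is flat-invariant in the family; no smoothness of the special fibre is needed for this degree to make sense. Second, one must verify that the $\mathcal L_i$ genuinely extend across the special fibre with the prescribed restriction $\mathcal L_i|_{\Pet_G}=L_{\varpi_i}$; this is exactly where the cited realization of $\Pet_G$ as a degeneration inside (a family over) $G/B$ is used, and I would lean on that literature for the existence of the family and of the equivariant line bundles on the total space, checking only the restriction to the special fibre. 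Once these two inputs are in place, the remainder is the two-line specialization argument above.
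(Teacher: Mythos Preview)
The paper does not prove this statement at all: Theorem~\ref{thm-M=I} is quoted verbatim from \cite[Theorem 1.1]{Horiguchi2024mixed} and is used only as input to the proof of Theorem~\ref{thm-MEN}. There is therefore no ``paper's own proof'' to compare against.

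That said, your outline is a correct and standard route to the result, and is essentially the strategy behind Horiguchi's theorem. The identification $V_\Phi(u_1,\ldots,u_n)=\tfrac{1}{n!}\int_{X_W}(\sum_i u_i\varpi_i)^n$ is the basic toric volume--degree formula under the lattice normalization fixed in the text, and extracting the multinomial coefficient gives $A^\Phi_{c_1,\ldots,c_n}=\int_{X_W}\varpi_1^{c_1}\cdots\varpi_n^{c_n}$ exactly as you write. Your second step, transporting the intersection number along a flat proper family via the Snapper polynomial, is also sound and handles the singularity of $\Pet_G$ correctly. The only substantive black box is the existence of the flat family with the line bundles $\mathcal L_i$ restricting to $L_{\varpi_i}$ on every fibre; you are right to flag this, and the references \cite{Abe2020Geometry,Klyachko1985orbits,Klyachko1995toric} (together with the fact that the cohomology rings of $\Pet_G$ and $X_W$ are isomorphic as graded rings, matching the classes $\varpi_i$) supply what is needed. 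One small caution: in parts of the literature the family runs in the opposite direction (the Peterson variety degenerating to the toric variety rather than the reverse), but your Snapper argument is symmetric in the fibres, so this does not affect the conclusion.
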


\begin{remark}
From \eqref{eq-def of MEN} and the above theorem, it follows that the volume polynomial in \eqref{eq-volume} has the following expression
\begin{equation}
    \operatorname{Vol} P_{\Phi}\left(u_1 \varpi_1+\cdots+u_n \varpi_n\right)=\frac{1}{n!} \int_{\Pet_{G}}\left(u_1 \varpi_1+\cdots+u_n \varpi_n\right)^n,  
\end{equation}
which is the volume polynomials of the nef divisors $\varpi_1, \ldots, \varpi_n$ on the Peterson variety $\Pet_{G}$. While the Peterson variety is singular in general \cite[Theorem 6]{kostant1996flag}, its rational cohomology ring $H^{*}(\Pet_{G};\mathbb{Q})$ in \eqref{eq-HHMpre} admits the structure of the cohomology ring of a (rational smooth) toric orbifold (see, for example, \cite{abe2023peterson} and \cite{gui2024weyl}), which satisfies all the K\"ahler package---the Poincar\'e duality, the hard Lefschetz theorem, and the Hodge--Riemann bilinear relation. This gives a different proof that $\operatorname{Vol} P_{\Phi}\left(u_1 \varpi_1+\cdots+u_n \varpi_n\right)$ is \emph{Lorentzian} in the sense of \cite{branden2020lorentzian}.
\end{remark}

Using the above theorem, Horiguchi gave a combinatorial model introduced in \cite{abe2024geometry} for the computation of the mixed Eulerian numbers, and derived a type-by-type computation for the mixed $\Phi$-Eulerian numbers for general Lie types by iteratively applying the Monk formula of Drellich in \cite{drellich2015monk}. As an application of our main theorem, we now derive a type-uniform formula for the mixed $\Phi$-Eulerian numbers in arbitrary Lie types. 

\begin{theorem}\label{thm-MEN}
    Let \(\Phi\) be an irreducible root system. Let \(c_1, \ldots, c_n\) be non-negative integers with \(c_1 + \cdots + c_n = n\).  
The mixed ${\Phi}$-Eulerian number $A_{c_{1}, \ldots, c_{n}}^{\Phi}$ can be computed using the following formula:
\[
A_{c_{1},\ldots,c_{n}}^{\Phi} = \frac{|W_{\Phi}|}{\det(C_\Phi)} 
\left[ M_{1}^{c_{1}} \cdots M_{n} ^{c_{n}} \right]_{\emptyset, \Delta}.
\]
where $M_{i}$ is the matrix defined in \eqref{eq-intro-Nmatrix}.
The notation 
\(
[\quad]_{\emptyset, \Delta}
\)
denotes the entry in the row indexed by $\emptyset$ and the column indexed by $\Delta$ of a matrix.
\end{theorem}

\begin{proof}
According to Theorem \ref{thm-M=I}, we have:
\[
A_{c_{1},\ldots ,c_{n}}^{\Phi} = \int_{\Pet_{G}} \varpi_{1}^{c_{1}} \cdots \varpi_{n}^{c_{n}}.
\]
Using Theorem \ref{thm-introNmain} and Corollary \ref{cor-relation}, it follows that:
\[
\varpi_{1}^{c_{1}} \cdots \varpi_{n}^{c_{n}} = a_{\Delta} \, \varpi_{1} \cdots \varpi_{n},
\]
where 
\[
a_{\Delta}=\left[ \left( (m_{1,J}^{K})_{J,K} \right)^{c_{1}} \cdots \left( (m_{n,J}^{K})_{J,K} \right)^{c_{n}} \right]_{(\emptyset, \Delta)}.
\]
By \cite[Theorem 1.1.(3)]{Horiguchi2024mixed}, we conclude:
\[
A_{c_{1},\ldots, c_{n}}^{\Phi} = a_{\Delta} \int_{\Pet_{G}} \varpi_{1} \cdots \varpi_{n} = \frac{|W_{\Phi}|}{\det(C_\Phi)} a_{\Delta}.
\]
\end{proof}

The table of explicit values of $\frac{|W_{\Phi}|}{\det(C_{\Phi})}$ is presented below. Now we give some examples using our formula to compute the mixed $\Phi$-Eulerian numbers.

\begin{table}[htb]
  \begin{tabular}{|c|c|} \hline
    $\Phi$ & $\frac{|W|}{\det (C_{\Phi})}$ \\ \hline \hline
    $A_n$ & $n!$ \\ \hline
    $B_n,C_n$ & $2^{n-1} \cdot n!$ \\ \hline
    $D_n$ & $2^{n-3} \cdot n!$ \\ \hline
    $E_6$ & $2^{7} \cdot 3^{3} \cdot 5$ \\ \hline
    $E_7$ & $2^{9} \cdot 3^{4} \cdot 5 \cdot 7$ \\ \hline
    $E_8$ & $2^{14} \cdot 3^{5} \cdot 5^{2} \cdot 7$ \\ \hline
    $F_4$ & $2^{7} \cdot 3^{2}$ \\ \hline
    $G_2$ &  $2^{2} \cdot 3$ \\ \hline
  \end{tabular}
  \caption{A list of values of $\frac{|W_{\Phi}|}{\det (C_{\Phi})}$.}
\label{tab:A list of values mPhi}
\end{table}

\begin{eg}
    For Lie type $A_8$ and $(c_1,\ldots,c_8)=(1,0,2,3,0,0,1,1)$. Then from our formula, it is easy to compute
    \begin{align*}A_{c_1,\ldots,c_8}^{\Phi}&=8!\left[M_{1}M_{3}^2M_{4}^3M_{7}M_{8}\right]_{\emptyset,\{1,2,3,4,5,6,7,8\}}\\ 
    &=8!\cdot\frac{41}{70}\\
    &=23616,
    \end{align*}
    which matches the computation in \cite[Example 5.4]{Horiguchi2024mixed}.
\end{eg}

\begin{eg}
    For Lie type $E_6$ and $(c_1,\ldots,c_6)=(0,1,0,2,3,0)$. Then from our formula, it is easy to compute
    \begin{align*}
        A_{c_1,\ldots,c_6}^{\Phi}&=\frac{|W_{E_6}|}{\det(C_{E_6})}\left[M_{2}M_{4}^2M_{5}^3\right]_{\emptyset,\{1,2,3,4,5,6\}}\\
        &= 2^7\cdot 3^3\cdot5\cdot\frac{81}{40}\\
        &=34992,
    \end{align*}
    which matches the computation in \cite[Example 7.21]{Horiguchi2024mixed}. 
\end{eg}

It can be seen from the above examples that our formula avoids the need to discuss the changes in the Lie types case-by-case when considering sub-root systems as in \cite[Section 7]{Horiguchi2024mixed}. The only thing needed is to substitute $(c_1,\ldots,c_n)$ into our formula to obtain the results.

\bibliographystyle{amsplain}
\bibliography{template}

\end{document}